\theoremstyle{plain}
\newtheorem{thm}{Theorem}[section]
\newtheorem{lemma}[thm]{Lemma}
\newtheorem*{thm*}{Theorem}
\newtheorem*{prop*}{Proposition}
\newtheorem{prop}[thm]{Proposition}
\newtheorem{cor}[thm]{Corollary}
\theoremstyle{definition}
\newtheorem{defi}[thm]{Definition}
\theoremstyle{remark}
\newtheorem{rmk}{Remark}
\newtheorem{rmks}{Remarks}
\newtheorem*{exs}{Examples}
\newtheorem{claim}{Claim}[thm]
\newtheorem*{claimproof}{Proof of claim}
\numberwithin{equation}{subsection}
\newcommand{\overbar}[1]{\mkern 1.5mu\overline{\mkern-1.5mu#1\mkern-1.5mu}\mkern 1.5mu}
\DeclareMathOperator{\lcm}{lcm}
\DeclareMathOperator{\Br}{Br}
\DeclareMathOperator{\End}{End}
\DeclareMathOperator{\Pic}{Pic}
\DeclareMathOperator{\Hom}{Hom}
\DeclareMathOperator{\Gal}{Gal}
\DeclareMathOperator{\Cl}{Cl}
\DeclareMathOperator{\Res}{Res}
\DeclareMathOperator{\inv}{inv}
\DeclareMathOperator{\im}{Im}
\DeclareMathOperator{\disc}{disc}
\DeclareMathOperator{\art}{art}
\DeclareMathOperator{\tr}{tr}
\DeclareMathOperator{\Gl}{GL}
\newcommand{\id}{\operatorname{Id}}
\newcommand{\res}{\operatorname{res}}
\newcommand{\ab}{\operatorname{ab}}
\newcommand{\cl}{\operatorname{Cl}}
\newcommand{\Hdg}{\operatorname{Hdg}}
\newcommand{\Aut}{\operatorname{Aut}}
\newcommand{\NS}{\operatorname{NS}}
\newcommand{\Z}{\mathbb{Z}}
\newcommand{\Q}{\mathbb{Q}}
\newcommand{\R}{\mathbb{R}}
\newcommand{\Pp}{\mathbb{P}}
\newcommand{\A}{\mathbb{A}}
\newcommand{\Oo}{\mathcal{O}}
\newcommand{\D}{\mathcal{D}}
\newcommand{\F}{\mathcal{F}}
\newcommand{\et}{\textrm{\'{e}t}}
\newcommand{\C}{\mathbb{C}}
\newcommand{\ord}{\mathrm{ord}}
\newcommand{\adjunction}[4]{\xymatrix@1{#1{\ } \ar@<-0.3ex>[r]_{ {\scriptstyle #2}} & {\ } #3 \ar@<-0.3ex>[l]_{ {\scriptstyle #4}}}}
\begin{document}

\title{Fields of definition of K3 surfaces with complex multiplication}

\author{Domenico Valloni}
\address{Leibniz Universität Hannover, Welfengarten 1, 30167 Hannover}
\email{valloni@math.uni-hannover.de}
\maketitle

\begin{abstract}
Let $X/\C$ be a K3 surface with complex multiplication by the ring of integers of a CM field $E$. We show that $X$ can always be defined over an Abelian extension $K/E$ explicitly determined by the discriminant form of the lattice $\NS(X)$. We then construct a model of $X$ over $K$ via Galois-descent and we study some of its basic properties, in particular we determine its Galois representation explicitly. Finally, we apply our results to give upper and lower bounds for a minimal field of definition for $X$ in terms of the class number of $E$ and the discriminant of $\NS(X)$. 
\end{abstract}

\tableofcontents

\section{Introduction}
An algebraic K3 surface $X/\C$ is said to have complex multiplication if its transcendental lattice $T(X)$ admits as many Hodge endomorphisms as possible: 
\begin{enumerate}
\item $E \coloneqq \End_{\Hdg}(T(X)_\Q)$ is a CM field and
\item $\dim_E T(X)_\Q = 1$,
\end{enumerate}
where $T(X)_\Q \coloneqq T(X) \otimes_\Z \Q$ and $\End_{\Hdg}(T(X)_\Q)$ are the endomorphisms of $T(X)_\Q$ that respect the Hodge decomposition. Note that in fact (2) implies (1); moreover, from (2) it also follows that $[E \colon \Q] \leq 20$ necessarily. 

It is known, see \cite[Theorem 3]{taelman2016}, that for any CM field $E$ with $[E \colon \Q ] \leq 20$ there exist K3 surfaces with complex multiplication by $E$. On the other hand, these surfaces are very difficult to construct geometrically, and in fact the following list exhausts all the known examples at the present moment: K3 surfaces of maximal Picard rank (related geometrically to CM elliptic curves via their Shioda-Inose structure), Kummer surfaces associated to CM Abelian surfaces, or K3 surfaces that are dominated by a product of two curves with CM Jacobians. Other examples are given by K3 surfaces $X$ for which the field $E$ is generated by the action of $\Aut(X)$ on the transcendental part of the cohomology. Note that in this latter case, the CM field $E$ is always a cyclotomic field (see \cite{kondo1992automorphisms} and \cite{SCHUTT2010206} for explicit constructions). 

Piateski-Shapiro and Shafarevich showed that CM K3 surfaces can always be defined over $\overline{\Q}$. In this work, we shall determine some preferred fields of definition for CM K3 surfaces, which will be abelian extensions of the CM field $E$. These extensions are determined by the arithmetic of $E$ (e.g., its relative class group) as well as the structure of $T(X)$ as an integral quadratic form. 

Among other things, our results allow us to give precise asymptotics for the minimal degree of a field of definition of $X$ in terms of the discriminant of its N\'{e}ron-Severi group and the relative class number of $E$ (Theorem \ref{theorem asymptotics introduction}), and to prove the finiteness result of Orr and Skorobogatov \cite{MR3830546} by techniques that are independent from the theory of Abelian varieties, but at the cost of restricting to K3 surfaces whose order of complex multiplication is the maximal one. Before stating our results, we recall some facts and notation. 

\subsubsection{K3 surfaces with complex multiplication} \label{subsection introduction K3 surfaces with CM}
If $X/ \C$ is a K3 surfaces with complex multiplication, then the CM field $E \coloneqq \End_{\Hdg} (T(X)_\Q)$ can be naturally embedded into $\C$ via the map 
$$E \hookrightarrow \End(T^{1, -1}(X)) \cong \C.$$ 

The ring $\Oo_X := \End_{\Hdg} (T(X))$ of integral Hodge endomorphisms is an order in $E$, and one says that $X$ is \textit{principal} if $\Oo_X$ is the ring of integers of $E$. As shown in \cite[Proposition 6.11]{VALLONI2021107772}, given a CM number field $E$ with $[E \colon \Q] \leq 10$, there are infinitely many K3 surfaces with CM by $\Oo_E$. This is in sharp contrast with the theory of CM Abelian varieties, for which the analogous set is finite.  

By work of Mukai, Nikulin and Buskin (\cite{Mukai}, \cite{NikulinHodgeK3} and \cite{Buskin}) the Hodge conjecture holds for $X \times X$ when $X$ is a CM K3 surface, meaning that all the elements of $\End_{\Hdg} (T(X)) \subset \mathrm{H}_B^4(X \times X, \Q)(2)$ are algebraic.

\begin{defi}
If $X$ is defined over a subfield $L \subset \C$, one says that $X$ has CM over $L$ if all the elements of $\End_{\Hdg}(T(X)_\Q)$ are defined over $L$ as \textit{cohomology classes}.
\end{defi}
The finite Abelian extensions of $E$ that will provide our fields of definition were introduced in Section 9 of \cite{VALLONI2021107772}. There, it is explained how to associate to every ideal $I \subset \Oo_E$ a finite Abelian extension $F_{I}(E) / E$ via class field theory. The various $F_I(E)$ are to K3 surfaces with complex multiplication what ray class fields are to CM elliptic curves, and this analogy is also supported by the way we have employed them to study the Brauer groups of CM K3 surfaces. For convenience of the reader, in Section \ref{K3 class filds subsection} we list the properties of such extensions that are needed in this work. We refer the reader to Section 9 and 10 of \cite{VALLONI2021107772} for a more complete treatment. 
\subsection{Our results} \label{subsection our results}
Given a CM, principal K3 surface $X / \C$ one has its discriminant ideal $\mathcal{D}_X \subset \Oo_E$, see Definition \ref{defi discriminant ideal}. There is an isomorphism of $\Oo_E-$modules $$\Oo_E / \D_X \cong T^\vee(X) / T(X)$$ 
where $T^\vee(X)$ denotes the dual of $T(X)$ (Proposition \ref{discriminantP}). In particular $$\mathrm{Nm}(\D_X) = |\disc(\NS(X))|$$ and hence the name. We say that $X$ has \textit{big discriminant} if the natural map $\mu(E) \rightarrow (\Oo_E / \D_X )^{\times}$ is injective, where $\mu(E)$ denotes the roots of unity in $E$. In Remark \ref{remarks discriminant} we show that this is equivalent to the natural map $\Aut(X) \rightarrow \Aut(\NS(X))$ being injective (for K3 surfaces for which the map $\Aut(X) \rightarrow \Aut(\NS(X))$ is not injective, see \cite{vorontsov1983automorphisms, zbMATH00039556, SCHUTT2010206}). We note that having big discriminant is a general condition, in the sense that, for most CM fields $E$, one has $\mu(E) = \{ \pm 1 \}$, and in this case $X$ has big discriminant as long as $\NS(X)$ is not a $2$-elementary lattice. The next theorem is the main result of our paper. 
 \begin{thm} \label{main theorem introduction}
Let $X / \C$ be a K3 surface with complex multiplication by $\Oo_E$, where we consider $E \subset \C$, and assume that $X$ has big discriminant. Then $X$ admits a model $X^{\text{can}}$ over $K := F_{\D_X}(E)$ such that $G_K := \Gal(\overline{K}/K)$ acts trivially on $\NS(\overline{X}^{\text{can}})$. Moreover, $X^{\text{can}}/K$ is uniquely determined by the following property: if $Y$ is a K3 surface over a number field $L$, with CM over $L$, such that $Y_{\C} \cong X$ and $G_L$ acts trivially on $\NS(\overline{Y})$ then $F_{\D_X}(E) \subset L$ and $X^{\text{can}}_{L} \cong Y.$ 
\end{thm}
\begin{rmk} \label{remark main theorem introduction}
Let $\rho(X)$ be the Picard rank of a K3 surface $X$. The fact that $G_K$ acts trivially on $\NS(\overline{X}^{\text{can}})$ means that $\rho(\overline{X}^{\text{can}}) = \rho(X^{\text{can}})$ and this prevents $F_{\D_X}(E)$ from being the smallest field of definition for $X$. On the other hand, the difference between a smaller field of definition of $X$ and $F_{\D_X}(E)$ can be uniformally bounded: there exists an effectively computable constant $C > 0$ such that, for every K3 surface $X$ over a number field $L$, there is a field extension $L'/L$ of degree  $[L':L] \leq C$ such that $G_{L'}$ acts trivially on $\NS(\overline{X})$ (see Huybrechts' book \cite{MR3586372}, p. 393). If $X$ as in Theorem \ref{main theorem introduction} can also be defined over a number field $L$, then the property in the theorem implies that $K \subset E \cdot L',$ and in particular $$\frac{1}{20 C} [K \colon \Q] \leq [L \colon \Q],$$
since $[E \colon \Q] \leq 20$ always. 
\end{rmk}
We classify K3 surfaces with $\rho(X) = 20$ to which Theorem \ref{main theorem introduction} applies.
\begin{prop} \label{most algebraic}
Let $X/\C$ be a K3 surface with complex multiplication by the ring of integers of an imaginary quadratic field $E$. Then $X$ has big discriminant unless 
\begin{itemize}
\item $T(X) \cong$ \( \begin{bmatrix}
2 & 0 \\
0 & 2 \\
\end{bmatrix} \), and therefore $E=\Q(i)$ or 
\item  $T(X) \cong$  \( \begin{bmatrix}
2 & 1 \\
1 & 2 \\ 
\end{bmatrix} \) and $E= \Q(\sqrt{-3})$.
\end{itemize}
\end{prop}
Thus, there are only two such surfaces, and they correspond to the ones studied by Vinberg \cite{MR719348}. When $[E \colon \Q] > 2$, we show that most K3 surfaces satisfy the big discriminant assumption. 
\begin{prop}
Let $E$ be a CM number field, and denote by $K(E)$ the set of isomorphism classes of K3 surfaces over $\C$ with CM by the ring of integers of $E$. Then, up to finitely many elements, every $X \in K(E)$ has big discriminant. 
\end{prop}
\begin{rmk}
From \cite{VALLONI2021107772}, Proposition 7.11, the set $K(E)$ is infinite whenever $[E \colon \Q] \leq 10$. 
\end{rmk}
When $X$ does not have big discriminant, it is not possible to determine a canonical field of definition for $X$, but one can still descend $X$ after fixing a level structure on its transcendental lattice. In Theorem \ref{DescendingI} we give a more general form of our main result.

We proceed to determine some properties of $X^\text{can}$. In Proposition \ref{Proposition Galois representation} we compute the Frobenius elements in the $\ell$-adic Galois representation of $X^{\text{can}}$ that correspond to prime ideals of $K$ coprime to $\ell$ and $\disc(\NS(X))$. Then in \ref{picard section} we study when the inclusion $\Pic(X^{\text{can}}) \subset \Pic(\overline{X}^{\text{can}})$ is an equality. 

\subsubsection{Asymptotics on fields of definition} \label{subsection introduction asymptotics}
In the following, we denote by $F \subset E$ the maximally totally real subfield of $E$ and by $d_X \coloneqq \D_X \cap \Oo_F$. Moreover, for any ideal $I \subset \Oo_E$ we put $$\phi_E(I) \coloneqq |(\Oo_E/I)^\times|$$ and we do similarly for $F$. Finally, let $h_E$ and $h_F$ be the class numbers of $E$ and $F$ respectively. 

\begin{thm} \label{theorem asymptotics introduction}
There are computable universal constants $A_1,A_2 > 0$ such that, for any K3 surface $X/ \C$ with CM by any $\Oo_E$, if $F_X$ is the minimal degree of a field of definition for $X$, then $$A_1 \cdot \frac{\phi_E(\D_X)}{\phi_F(d_X)} \cdot \frac{h_E}{h_F} \leq F_X \leq A_2 \cdot \frac{\phi_E(\D_X)}{\phi_F(d_X)} \cdot \frac{h_E}{h_F}.$$
In particular, since $h_E^- \coloneqq \frac{h_E}{h_F} $ is always an integer, for any $X$ one has
$$A_1 \cdot \frac{\phi_E(\D_X)}{\phi_F(d_X)} \leq F_X.$$
\end{thm}
The universal above means that $A_1$ and $A_2$ work for any $X$ and any $\Oo_E.$  Using \cite[Lemma 4]{zbMATH04077384}, which is a corollary of the Brauer-Siegel theorem, we are then able to prove Orr and Skorobogatov's result in the principal case:
\begin{thm} \label{finiteness}
Let $N >0$ be an integer. There are only finitely many $\C$-isomorphism classes of CM, principal K3 surfaces that can be defined over an extension $K / \Q$ of degree at most $N$. 
\end{thm}
\subsubsection{Classifying CM K3 surfaces with small fields of definition} \label{subsection introduction small fields}
In Section \ref{subsection Principal CM K3 surfaces with small fields of definition} we classify K3 surfaces with CM by $\Oo_E$ that admit a model over $E$ of full Picard rank. The fields of definition of K3 surfaces of maximal Picard rank were completely determined by Sch\"{u}tt in \cite{MR2346573} and \cite{MR2602669}. Since our techniques are different from his, it is interesting to compare the final results. As an instance, Theorem 2 and Lemma 33 of \cite{MR2602669} together with our main result imply that when $E$ is quadratic imaginary one has $F_{\D_X}(E) = H(d)$, where $H(d)$ is the ring class field of the only quadratic order of discriminant $d = \disc(T(X))$. We conclude the paper by giving a direct proof of the equality $F_{\D_X} = H(d)$ under the assumption that $d$ is a fundamental discriminant.

\subsubsection{Notation and conventions}
We adopt the following notation. 
\begin{itemize}
\item We denote by $\widehat{\Z} := \varprojlim_{n} \Z / n \Z$ the profinite completion of $\Z,$ and with $\A_f := \Q \otimes \widehat{\Z}$ the finite ad\`{e}les of $\Q.$
\item Similarly for any number field $E$, we denote by $\widehat{\Oo}_E := \widehat{\Z} \otimes \Oo_E$ the integral finite ad\`{e}les of $E$, and $\A_{E,f} := \A_f \otimes E$ the finite ad\`{e}les of $E$.

\item If $R$ is a ring, $R^\times$ denotes the group of invertible elements. 
    \item For a finitely generated $\Z$-module $N$, we denote $\widehat{N} := N \otimes \widehat{\Z}$ for its profinite completion. For example, $\widehat{T}(X)$ will denote the profinite completion of the transcendental lattice of a K3 surface $X.$
    \item By a lattice we mean a finitely generated, free $\Z-$module $N$ endowed with an integral, symmetric, non-degenerate quadratic form. Its signature is the signature of $N_\R$, and $\mathrm{O}(N) \subset \Gl(N)(\Z)$ is its isometry group. 
    \item For a K3 surface $X/ \C$ with CM, we write $E_X \coloneqq \End_{\Hdg}(T(X)_\Q)$ and $\Oo_X \coloneqq \End_{\Hdg}(T(X))$;
    \item For a K3 surface $X/ \C$ with CM, one can always consider the CM field $E_X$ canonically embedded into $\C$. We showed in \cite{VALLONI2021107772} that $E_X \subset \C$ corresponds to the reflex field of $T(X)$. So we shall refer to $E_X$ as to the reflex field of $X$ when we want to emphatise that $E$ is a subfield of $\C$.
\end{itemize}
Finally, all the number fields in the paper are to be considered inside $\C$. In fact, we shall always start with a K3 surface $X/ \C$, so that $\C$ is part of the data. Moreover, since all the number fields we consider are abelian extensions of $E$, and since $E \subset \C$ naturally by the previous point, we can consider all these abelian extensions as subfields of $\C$ in a natural way. 
\section*{Acknowledgement}
This work was written while the author was a PhD student at Imperial College London. The research was funded by an EPSRC studentship (project reference EP/N509486/1). 

\section{Preliminaries} \label{Preliminaries}
In this section we introduce the main objects of the paper. 
\subsubsection{K3 surfaces and lattices}
Let $X$ be a complex K3 surface. The second Betti cohomology group of $X$ together with the intersection form $$(-,-)_X \colon \mathrm{H}_B^2(X, \Z(1)) \times \mathrm{H}_B^2(X, \Z(1)) \rightarrow \mathrm{H}_B^4(X, \Z(2)) \cong \Z$$ is an even unimodular lattice of rank $22$ and signature $(3,19)$, whose isomorphism class does not depend on the chosen $X$. It is usually denoted by $\Lambda_{K3}$ and called the K3 lattice. Using the fact that $X$ is simply connected, one can show that the natural quotient map $\Pic(X) \twoheadrightarrow \NS(X)$ is an isomorphism, where $\NS(X)$ is the N\'{e}ron-Severi group of $X$. Therefore, the first Chern-class map provides a primitive embedding of lattices $c_1 \colon \Pic(X)  \cong \NS(X) \hookrightarrow \mathrm{H}_B^2(X, \Z(1))$, and the orthogonal complement of $\NS(X)$ in  $\mathrm{H}_B^2(X, \Z(1))$ is the transcendental lattice $T(X) := \NS(X)^{\perp}$, which is an even lattice of signature $(2,22 - \rho)$, where $\rho = \mathrm{rank}(\NS(X))$ is the Picard number of $X.$
As explained by Nikulin in part 3 of Chapter 1 of \cite{MR525944}, in this kind of situation it is natural to introduce the finite quadratic form associated to $\NS(X)$. In order to construct it consider first the dual lattice of $\NS(X)$: $$\NS(X)^{\vee} := \{ x \in \NS(X)_{\Q} \colon (x,v)_X \in \Z \,\, \text{for all} \,\, v \in \NS(X) \} \cong \Hom(\NS(X), \Z)$$ and put $A_N := \NS(X)^{\vee}  / \NS(X)$, under the canonical inclusion $\NS(X) \subset \NS(X)^{\vee}$. Then one define a quadratic form $q_N$ on $A_N$ by the rule 
$$q_N( x + \NS(X)) = (x,x)_X + 2 \Z,$$
which makes sense because $\NS(X)$ is even. With the same procedure one can associate a finite quadratic form $(A_T, q_T)$ to $T(X)$ as well. Nikulin then proved  ( \cite[1.6.1]{MR525944} ) that the embedding $T(X) \oplus \NS(X) \subset \mathrm{H}_B^2(X, \Z(1))$ induces a natural identification 
\begin{equation} \label{identification discriminants}
(A_T, -q_T) \cong (A_N, q_N)
\end{equation}
and that, on the contrary, any embedding of $\NS(X)$ into an even unimodular lattice with orthogonal complement isometric to $T(X)$ is determined by such an isomorphism. 
\begin{defi} \label{defiform}
The finite quadratic form $(A_N, q_N) \cong (A_T, -q_T) $ is called the discriminant form of $X$, and we denote it by $(D_X, q_X)$ (we drop the subscript `$X$' if no confusion can arise). The group of isomorphism of $D$ preserving $q$ is denoted by $O(q)$. We have natural maps 
$ d_N \colon O(\NS(X)) \rightarrow O(q) $ and $ d_T \colon O(T(X)) \rightarrow O(q) $, where the latter is constructed using the identification \eqref{identification discriminants}. 
\end{defi}

\begin{lemma}[Nikulin] \label{lemma nikulin} \label{lemma}
Two isometries $f_N \in O(\NS(X))$ and $f_T \in  O(T(X))$ can be lifted to a (necessarily unique) isometry $f \in O(  \mathrm{H}_B^2(X, \Z(1)) )$ if and only if $d_N(f_N) = d_T(f_T)$. 
\end{lemma}
\begin{rmks}  \label{remark nikulin lemma}
\begin{enumerate}
\item If $f_T$ is a Hodge isometry (i.e., an isometry that respects the Hodge decomposition) and the lifting $f$ exists, then $f$ is a Hodge isometry as well;
\item It follows that one has a pull-back diagram 
\begin{center}

\begin{tikzcd}
O_{\Hdg}(  \mathrm{H}_B^2(X, \Z(1)) \arrow[d] \arrow[r] & O(\NS(X)) \arrow[d,"d_N"]  \\
O_{\Hdg}(T(X)) \arrow[r, "d_T"] &  O(q).
\end{tikzcd}
\end{center}

\item One can formulate Lemma \ref{lemma} in the \et ale context as well. In this case, the role of $\Z$ is played by $\widehat{\Z}$ (see for example the discussion at Section 7 of Chapter 1 of \cite{MR525944}). One considers the $\widehat{\Z}$-lattices $\widehat{\NS}(X) \coloneqq \NS(X) \otimes \widehat{\Z}$ and $ \widehat{T}(X) \coloneqq T(X) \otimes \widehat{\Z}$, both inside $\mathrm{H}^2_{\et}(X , \widehat{\Z}(1))$. All these groups are naturally endowed with a bilinear pairing with values in $\widehat{\Z}$, and one carries out the very same definitions and computations as before. In particular, since $\widehat{T}(X)^\vee / \widehat{T}(X) \cong D_X$ via the comparison isomorphism between \et ale and singular cohomology, every isometry $f \colon \widehat{T}(X) \rightarrow \widehat{T}(X)$ induces a finite isometry $d_T(f) \in O(q)$. 

\end{enumerate}
\end{rmks}

\subsubsection{Main theorem of complex multiplication.} We follow \cite{2005math......8018R} or \cite{VALLONI2021107772} as references, and we adopt the same notation of the introduction. Let $X/ \C$ be a complex K3 surface with complex multiplication and let $E \coloneqq E_X \subset \C$ be its reflex field. Then, the algebraic group $\Res_{E/\Q} \mathbb{G}_{m,E}$ acts naturally on $T(X)_\Q$, and Zarhin showed in \cite[2.3.1.]{MR697317} that the Mumford-Tate group of $T(X)$ is identified with the torus $U_E \subset \Res_{E/\Q} \mathbb{G}_{m,E}
¸$ defined by the equation $x \overline{x} =1$. Suppose that $X$ can be defined over a number field $L \subset \C$ and that $X$ has complex multiplication over $L$ too, which translates into $E \subset L$.
Attached to $X$ there a Galois representation $\rho \colon G_L \rightarrow \Aut(T(X)_{\Q})(\A_f)$, with image in $U_E(\A_f)$, and the main theorem of complex multiplication describes $\rho$ in terms of class field theory. We recall that class field theory provides us with a commutative diagram
\begin{center}

\begin{tikzcd}
\A_{L}^{\times} \arrow[d, "\mathrm{Nm}_{L/E}"] \arrow[r, "\art_L"] & G_L^{ab} \arrow[d, "\res_{K/E}"]  \\
\A_{E}^{\times} \arrow[r, "\art_E"] & G_E^{ab},
\end{tikzcd}
\end{center}
where $\art_E$ is the Artin map, a surjective, continuous morphism with $E^\times$ is its kernel (and similarly for $L$). 
Note that, since $E$ is a CM field, both the Artin maps factorise through the finite id\`{e}les. We have 
\begin{thm} \label{MTCM}
Let $\tau \in G_L$, $t \in \A_{L,f}^{\times}$ such that $\art_L(t) = \tau_{| L^{\textit{ab}}}$ and put $s := \mathrm{Nm}_{L/E}(t) \in \A_{E,f}^{\times}$. There exists a unique $u \in U_E(\Q)$ such that $$\rho(\tau) = u \frac{s}{\overline{s}} \in U_E(\A_f),$$
where $s \mapsto \overline{s}$ is the complex conjugation. 
\end{thm}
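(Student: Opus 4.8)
The plan is to reduce the statement to the main theorem of complex multiplication for abelian varieties by means of the Kuga--Satake construction. First I would record the adelic form of the abelian-variety case (Shimura--Taniyama, in the refinement of Langlands--Deligne--Tate): if $B$ is an abelian variety over a number field $M$ with CM by a field $F$ of type $\Phi$ and reflex field $F^{\ast}\subseteq M$, and $\tau\in G_M$ satisfies $\art_M(t')=\tau_{|M^{\mathrm{ab}}}$ for $t'\in\A_{M,f}^{\times}$, then $\tau$ acts on $H^1_{\et}(\overline B,\A_f)\cong\A_{F,f}$ by multiplication by $N_{\Phi}\bigl(\text{Nm}_{M/F^{\ast}}(t')\bigr)^{-1}$, where $N_{\Phi}\colon\Res_{F^{\ast}/\Q}\mathbb{G}_m\to\Res_{F/\Q}\mathbb{G}_m$ is the reflex norm. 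The sign and inverse conventions must be fixed once and for all, since they propagate through the whole argument.

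Next I would set up the Kuga--Satake side. Over $\C$, the even Clifford algebra construction attached to $H^2_B(X,\Q)$ with its intersection form (or to its primitive part) produces an abelian variety $A=\mathrm{KS}(X)$ together with an embedding of Hodge structures $H^2_B(X,\Q)\hookrightarrow\End_\Q\!\bigl(H^1_B(A,\Q)\bigr)$ (after a Tate twist). If $X$ has CM then the Mumford--Tate group of $H^2_B(X,\Q)$ is a torus, hence so is that of $A$, so $A$ has CM; I would read off its CM type $\Phi_A$ and reflex field $F^{\ast}_A$ from the action of $E$ on $H^1_B(A,\C)$, the essential book-keeping being to express these quantities in terms of the single datum $(E,\sigma_X)$ attached to $T(X)$. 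I would then invoke the algebraicity of the Kuga--Satake correspondence --- André's theorem that it is motivated, together with Madapusi Pera's work --- to ensure that, after replacing $L$ by a finite extension $L'$, the induced embedding $T(X)_{\A_f}\hookrightarrow\End\!\bigl(H^1_{\et}(\overline A,\A_f)\bigr)$ is $G_{L'}$-equivariant and $A$ has CM over $L'$.

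Feeding $A/L'$ into the main theorem of CM and restricting the resulting identity along this $G_{L'}$-equivariant embedding, one gets that $\tau$ acts on $T(X)_{\A_f}$ as conjugation by $N_{\Phi_A}\bigl(\text{Nm}_{L'/F^{\ast}_A}(t')\bigr)$; since $T(X)_\Q$ is a rank-one $E$-module this conjugation is multiplication by an element of $U_E(\A_f)$. Now comes the reflex-norm computation: using that $\text{Nm}_{L'/F^{\ast}_A}$ factors the relevant part of $\text{Nm}_{L'/E}$ through $\text{Nm}_{F^{\ast}_A/E}$, and that the reflex norm of the K3-type Hodge cocharacter, composed with $x\mapsto x/\overline x$, is the natural surjection $\Res_{E/\Q}\mathbb{G}_m\to U_E$, one identifies the above element with $\tfrac{s}{\overline s}$ up to the image of $U_E(\Q)$. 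That residual ambiguity is exactly what a reciprocity statement for a torus leaves open before its rationality refinement; it is removed --- yielding the precise correction $u\in U(\Q)$ --- by a Betti--étale rationality argument, namely that the Hodge isometry realizing $\tau^{\ast}$ is defined over $\Q$ because it is of motivic origin. Uniqueness of $u$ is immediate from the injectivity of $U(\Q)\hookrightarrow U(\A_f)$, and the passage from $L'$ back to $L$ is standard (one may in fact arrange $L'=L$ after a harmless modification, since the statement is compatible with restriction of automorphisms and with the norm and Artin maps).

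The step I expect to be genuinely delicate is the one just described: matching the CM type and reflex field of $\mathrm{KS}(X)$ with $(E,\sigma_X)$ precisely enough that the reflex-norm formula for $A$ collapses to ``$\text{Nm}_{L/E}$ followed by $x\mapsto x/\overline x$'', together with correctly tracking the Tate twist relating $H^2(X)$ to $\End(H^1(A))$ --- this is exactly the computation that accounts for the elementary-looking factor $u\in U(\Q)$. The algebraicity of the Kuga--Satake correspondence is a heavy input but not a real obstacle here, since the conclusion is insensitive to finite base change. Alternatively one can bypass Kuga--Satake entirely and apply Shimura's reciprocity law at the CM point of the $\mathrm{GSpin}$-Shimura variety to which the arithmetic period morphism of Rizov and Madapusi Pera sends $X$; the same reflex-norm computation then reappears as the evaluation of the reciprocity morphism on the standard representation of $\mathrm{SO}$.
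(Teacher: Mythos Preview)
The paper does not give a proof of this theorem: it is quoted in the preliminaries as a known result, with references to Rizov's thesis and to the author's earlier paper. So there is no ``paper's own proof'' to compare against here; what follows is an assessment of your sketch on its own terms.

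Your outline is broadly sound, and the alternative you mention at the end --- applying the reciprocity law at a special point of the orthogonal (or $\mathrm{GSpin}$) Shimura variety via the period map --- is in fact the route taken in the cited literature. That approach is cleaner than your primary one because it avoids having to identify the CM algebra and type of the Kuga--Satake variety explicitly: the reciprocity morphism of the Shimura datum $(\mathrm{SO}(T(X)_\Q),h)$ at the CM point is computed directly in terms of the cocharacter $\mu_h$, and for a rank-one $E$-module with its trace form this cocharacter visibly factors through $U_E$, giving $s\mapsto s/\overline s$ on the nose. By contrast, in your Kuga--Satake route the endomorphism algebra of $A$ is (a product of matrix algebras over) the even Clifford algebra $C^+(T(X)_\Q)$, not $E$ itself, and extracting from the reflex norm of $A$ the simple map $s\mapsto s/\overline s$ on $T(X)_{\A_f}$ requires exactly the Clifford-algebra bookkeeping you flag as delicate. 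It can be done, but it amounts to redoing the Shimura-variety computation inside the Clifford algebra.

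One point of phrasing to tighten: the element $u\in U(\Q)$ is not a residual ambiguity to be ``removed'' but is part of the conclusion. What the reciprocity law produces is a \emph{rational} Hodge isometry $\eta\colon T(X)_\Q\to T(X^\tau)_\Q$ with $\tau^\ast=(\eta\otimes\A_f)\circ(s/\overline s)$; since $X$ is defined over $L$ one has $X^\tau=X$, so $\eta\in\End_{\mathrm{Hdg}}(T(X)_\Q)=E$ and $\eta\overline\eta=1$ forces $\eta\in U(\Q)$. That is your $u$, and its uniqueness is, as you say, the injectivity of $U(\Q)\hookrightarrow U(\A_f)$.
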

Rizov proved the theorem above by establishing it first for K3 surfaces of maximal Picard rank and and then concluding via a density argument. Note that a shorter proof is also provided by Madapusi-Pera \cite[Corollary 4.4]{MR3370622} using the theory of absolute Hodge cycles.  
\begin{rmk}
The map $\rho(\tau)$ is a $\widehat{\Z}$-linear isometry of $\widehat{T}(\overline{X})$, because the intersection form has value in $\mathrm{H}^4_\et(\overbar{X}, \widehat{\Z}(2)) \cong \widehat{\Z}$, which has trivial Galois action. It follows that also multiplication by $u \frac{s}{\overline{s}}$ must be an isometry of $\widehat{T}(\overline{X})$. This implies that $u \frac{s}{\overline{s}} \in \hat{\Oo}_E^\times$, and therefore thanks to the second remark after Lemma \ref{lemma}, it makes sense to consider the induced map $d_T \big(u \frac{s}{\overline{s}} \big) \in O(q_X)$. A direct consequence of the theorem above is that 
\begin{equation} \label{agree on discriminant}
d_T \bigg(u \frac{s}{\overline{s}} \bigg) = d_N(\tau^*|_{\NS}),
\end{equation}
where $\tau^*|_{\NS} \colon \NS(\overline{X}) \rightarrow \NS(\overline{X})$ denotes the Galois action on the N\'{e}ron-Severi group. 
\end{rmk} 
\subsubsection{K3 class fields} \label{K3 class filds subsection}
Let $F \subset E$ be the maximal totally real subextension of $E$ and let $I \subset \Oo_E$ be an ideal. The fields $F_{I}(E)$ are finite Abelian extensions of $E$, therefore we can describe them using class field theory. In fact, let $K$ be any finite Abelian extension of $E.$ Since $E$ is a CM field, the map $\art_E$ induces an isomorphism $$ \Gal(K/E) \cong \frac{\A^\times_{E,f}}{E^\times \cdot \mathrm{Nm}_{K/E}(\A^\times_{K,f})},$$ and giving the finite-index subgroup $E^\times \cdot \mathrm{Nm}_{K/E}(\A^\times_{K,f})$ of $\A^\times_{E,f}$ is the same as giving $K$. For what concerns $F_I(E)$, we have
\begin{enumerate}
\item The norm group of $F_{I}(E)$ corresponds to $$S_I := \{ s \in \A^{\times}_{E,f} \colon \exists u \in U(\Q) \colon u \frac{s}{\overline{s}} \Oo_E = \Oo_E \,\, \text{and} \,\, u \frac{s}{\overline{s}} \equiv 1 \mod I \}.$$ Since $ S_{I} = S_{\overline{I}} = S_{\lcm(I , \overbar{I})},$ we can choose $I$ such that $I = \overline{I}$ without loss of generality. 
\item Let $K_I(E)$ denote the ray class field modulo the ideal $I$, and let $\Cl_I(E)$ the ray class group modulo $I$, so that $\Gal(K_I(E) / E) \cong \Cl_I(E)$. 
Since $I = \overline{I}$ by assumption, complex conjugation acts on $\Cl_I(E)$. Denote by $\Cl_I'(E)$ the co-invariant of $\Cl_I(E)$, that is, $\Cl_I'(E) \coloneqq \Cl_I(E)/ \Cl_I(E)^G$ where $G$ is the group generated by the complex conjugation, and by $K_I'(E)$ the unique field subextension of $K_I(E)$ such that $\Gal(K_I'(E) / E) = \Cl_I'(E)$. If $E$ is quadratic imaginary, then \cite[Remark 9.2.]{VALLONI2021107772} says that
\begin{equation} \label{K3classgroup}
F_{I}(E) = K_I'(E) 
\end{equation}
\item In general, let $$E^{I,1} = \{ e \in E^\times \colon  e-1 \equiv 0 \mod I \}$$ and let $\Oo_E^{I}:= \Oo_E^\times \cap E^{I,1}$. We have a diagram 
\begin{center}
\begin{tikzcd}[every arrow/.append style=dash] 
& K_I(E) 
 \arrow{dd} & \\
 F_{I}(E) 
  \arrow{dr} & \\
& K_I'(E)
 \arrow{d}\\
& E,
\end{tikzcd}
\end{center}
with 
\begin{equation} \label{K3generalgroup}
\Gal(F_{I}(E) / K_I'(E)) \cong \frac{\Oo_F^\times \cap \mathrm{Nm}_{E/F}(E^{I,1})}{\mathrm{Nm}_{E/F}(\Oo_E^{I})}.
\end{equation}
\item Let $X/ \C$ have complex multiplication by the ring of integers of $E \subset \C$, and define $$T(X)[I] := \{v \in T(X) \otimes \Q / \Z \colon ix = 0 \,\, \forall \,\, i \in I \}.$$
Note that $T(X)[I]$ can also be defined via \et ale cohomology, so in particular it is functorial with respect to any scheme isomorphism. Consider the group 
$$ \mathcal{S}_I := \{ \tau \in \Aut(\C / E) \colon \exists f \colon  T(X^\tau) \xrightarrow{\sim} T(X)\,\, \text{with} \,\,f \circ \tau^* |_{T(X)[I]} = \id\},$$
where $f$ is an integral Hodge isometry, $X^\tau$ is the base-change of $X$ along $\C \xrightarrow{\tau} \C$ and $\tau^* \colon \widehat{T}(X) \otimes \Q/ \Z \rightarrow \widehat{T}(X^\tau)  \otimes \Q/ \Z $ and $f_* \colon \widehat{T}(X^\tau) \otimes \Q/ \Z \rightarrow \widehat{T}(X)  \otimes \Q/ \Z   $ are the natural induced maps. We showed in \cite[Theorem 11.2]{VALLONI2021107772} (see also Remark 4.1 in the same paper) that $F_{I}(E)$ is the fixed field of $ \mathcal{S}_I$. Differently said, $F_{I}(E)$ is the field of moduli over $E$ of the pair $(T(X), T(X)[I]).$
\end{enumerate}

\section{Discriminant ideal and K3 surfaces with big discriminant}
In this section we introduce the discriminant ideal of a principal K3 surface $X / \C$ with complex multiplication. Let $X/ \C$ be a K3 surface with complex multiplication by the maximal order of $E_X = \End_{\Hdg}(T(X)_\Q)$. Recall that $\Oo_X = \End_{\Hdg}(T(X))$. Consider \begin{equation} \label{Inverse different}
\D_X^{-1} \coloneqq \{ f \in E_X \colon (f(v), w)_X \in \Z \,\, \text{for all} \,\, v,w \in T(X) \},
\end{equation} 
where $(-,-)_X$ is the intersection pairing on $T(X).$ It is readily checked that $\Oo_X \subset \D^{-1}_X$ and that $\D^{-1}_X$ is a fractional ideal of $E_X.$ Note moreover that $\overline{\D_X^{-1} } = \D_X^{-1} $ since $(f(v), w)_X = (v, \overline{f}(w))_X.$

\begin{defi} \label{defi discriminant ideal}
The discriminant ideal $\D_X \subset \Oo_X$ is by definition the inverse of $\D_X^{-1}$:

$$ \D_X := \{ f \in E_X \colon f \cdot \D_X^{-1} \subset \Oo_X \}.$$

\end{defi} 
If $T(X)^\vee \subset T(X)_\Q$ is the dual of $T(X)$, then the definition above is equivalent to 
\begin{equation} \label{equation discriminant}
    \D_X = \{ f \in E_X \colon f (T(X)^\vee) \subset T(X) \}.
\end{equation}
Another and more direct description of $\D_X$ can be given using the notion of type of a CM K3 surface, introduced in Section 7 of \cite{VALLONI2021107772}. A type is a linear data on the CM field $E_X$ that completely classifies $T(X)$ as an integral, polarized Hodge structure. The notion of type is analogous to the one used in the theory of CM Abelian varieties: let $(E, \sigma)$ be a couple consisting of a CM field $E$ with an embedding $\sigma \colon E \hookrightarrow \C$, and let $(X, \iota)$ be a principal CM K3 surface with an isomorphism $\iota \colon E \xrightarrow{\sim} E_X = \End_{\Hdg}(T(X)_\Q)$. We denote by $\sigma_X \colon E_X \rightarrow \C$ the natural embedding and we consider $T(X)$ as an $\Oo_E$-module via the map $\iota$. 
\begin{defi} \label{TYPE}
Let $\alpha \in F^\times$ and let $I \subset E$ be a fractional ideal of $E$. We say that $(T(X), \iota)$ is of type $(I, \alpha, \sigma)$ if there exists an isomorphism of $\Oo_E-$modules $$\phi \colon T(X) \xrightarrow{\sim} I $$ such that, if $(-,-)_X$ denotes the intersection pairing on $T(X)$, one has: 
\begin{enumerate}
\item $(v,w)_{X} = \tr_{E/ \Q} \Big( \alpha \phi(v) \overline{\phi(w)} \Big)$ for every $v,w \in T(X)$;
\item $\sigma_X \circ \iota = \sigma. $
\end{enumerate}

\end{defi}
Note that $(I, \alpha, \sigma)$ is an integral, polarized Hodge structure: the polarization $I \times I \rightarrow \Z$ is given by $(x,y) \mapsto \tr_{E/\Q}(\alpha x \overline{y})$, and the Hodge decomposition is determined by 
$$I^{2,0} := \{ v \in I_\C \colon e \cdot v = \sigma(e)v \,\, \text{for all} \,\, e \in E \}.$$
One can readily verify that every principal CM K3 surfaces has a type and that, if $\iota$ is fixed, two different types $(I, \alpha, \sigma)$ and $(J, \beta, \sigma)$ represents the same transcendental lattice if and only if there exists $e \in E^\times$ such that $eJ=I$ and $\beta = e \overline{e} \alpha.$

\begin{prop}  \label{discriminantP}
Let $(X, \iota)$ be of type $(I, \alpha, \sigma)$. Then 
\begin{enumerate}
\item Under the map $\iota$ the ideal $\D_X$ corresponds to $ (\alpha) I \overline{I} \D_E;$
\item The type map $\phi \colon T(X) \rightarrow I$ induces an isomorphism between the $\mathcal{O}_E$-modules $D_X$ and $\mathcal{O}_E / \D_X.$ In particular, $\mathrm{Nm}(\D_X) = \disc(T(X)).$
\end{enumerate}
\end{prop}

\begin{proof}
\begin{enumerate}
\item If we consider $E$ as a rational quadratic space with the quadratic form given by $\tr(\alpha x \overline{y})$, then the map $\phi_\Q \colon I_\Q \cong E \rightarrow T(X)_\Q$ is an isometry that restricts to an isomorphism between $I$ and $T(X).$ Consequently, $\phi_\Q$ also induces an isomorphism between $I^\vee \subset E$ and $T(X)^\vee \subset T(X)_\Q.$ It is straightforward to check that $I^\vee$ is the fractional ideal given by $(\alpha^{-1}) \overline{I}^{-1} \D_E^{-1},$ so we use \eqref{equation discriminant} to write $$\iota^{-1}(\D_X) = \{ e \in E \colon eI^\vee \subset I \} = (\alpha) I \overline{I} \D_E.$$

\item In fact, we have the following isomorphisms $$T(X)^\vee/ T(X) \cong I^\vee / I \cong \Oo_E / (\alpha) I \overline{I} \D_E.$$
\end{enumerate}
\end{proof}
\begin{defi}[Big discriminant]
The group of integral Hodge isometries of $T(X)$ is denoted by $\mu(X)$ and corresponds to the roots of unity in $E_X$: 
$$ \mu(X) = \{ e \in  \Oo_E \colon e \overline{e} = 1 \}.$$
The kernel of the canonical map $d_T \colon \mu(X) \rightarrow O(q_X)$ is denoted by $K_X$ and we say that $X$ has \textit{big discriminant} whether $K_X = 1$.
\end{defi}

\begin{rmks} \label{remarks discriminant}
\begin{itemize}
\item Thanks to the second point in Proposition \ref{discriminantP}, having big discriminant is equivalent to the injectivity of the natural map $\mu(E) \rightarrow (\Oo_E / \D_X )^{\times}$.
\item There is always a natural injection $K_X \hookrightarrow \Aut(X).$ Indeed, for any $\mu \in K_X$, the map $(\mu, \id) \colon T(X) \oplus \NS(X) \rightarrow T(X) \oplus \NS(X)$ can be extended to an integral Hodge isometry $\mathrm{H}^2_B(X, \Z(1)) \rightarrow \mathrm{H}^2_B(X, \Z(1)),$ which in turn is induced by a unique automorphism of $X$, thanks to Torelli Theorem. 
\item It follows that $X$ has big discriminant if and only if the natural map $\Aut(X) \rightarrow \mathrm{O}(\NS(X))$ is injective. 
\end{itemize}
\end{rmks}

\begin{prop} \label{keydiscriminant} 
Let $E$ be a CM number field and let $X / \C$ be a K3 surface with CM by $\Oo_E$. Then $\D_X \subset (2)^{-1}\D_{E/F}.$
If moreover $E$ is quadratic imaginary, then $\D_X \subset \D_{E}.$
\end{prop}

\begin{rmk}
In particular, if $E$ is quadratic imaginary and the map $\mu(E) \rightarrow ( \Oo_E / \D_{E})^{\times}$ is injective, then every K3 surface with CM by $\Oo_E$ has automatically big discriminant. 
\end{rmk}

\begin{proof}
For any fractional ideal $I$ of $E$, let $\mathrm{Nm}_{E/F}(I) \subset F$ be its norm, i.e. the fractional ideal of $F$ generated by the elements $x \overline{x}$ for $x \in I,$ so that we have $\mathrm{Nm}_{E/F}(I) \Oo_E = I \overline{I}.$ Let $(I , \alpha)$ be the type of $X$. Every element of $\mathrm{Nm}_{E/F}(I)$ can be written as a finite sum of elements of the form $fx \overline{x}$, with $f \in \Oo_F$, and we compute $$\text{tr}_{F/\Q}( \alpha f x \overline{x}) = 2^{-1} \text{tr}_{E/\Q}( \alpha f x \overline{x}) \in 2^{-1} \Z,$$
since the quadratic form $(I, \alpha)$ is integral. Therefore, by the property of the discriminant ideal, we must have that $$(\alpha) \mathrm{Nm}_{E/F}(I) \subset (2)^{-1} \D_{F/\Q}^{-1}.$$
If we 'base-change' the inclusion above to $\Oo_E$ we obtain that $$(\alpha) I \overline{I} \subset (2)^{-1} \D_{F/\Q}^{-1} \Oo_E, $$ and the first part of the proposition follows by multiplying both sides by $\D_{E}$. To prove the second statement, we note that the quadratic form $(I, \alpha)$ is also even, so that $\text{tr}_{E/\Q}( \alpha x \overline{x}) \in 2 \Z$. But since $E$ is quadratic imaginary by assumptions, we deduce that $\alpha x \overline{x} \in \Z$ for every $x \in I$. Consider again the fractional ideal $\mathrm{Nm}_{E/\Q}(I)$; every $y \in \mathrm{Nm}_{E/\Q}(I)$ can be written as $y = n_1 x_1 \overline{x}_1 + \cdots + n_k x_k \overline{x}_k$ with $n_i \in \Z$ and $x_i \in I$, so thanks to the computation above we conclude that $(\alpha) \mathrm{Nm}_{E/F}(I) \subset \Z$.
Base-changing the above equation to $\Oo_E$, we obtain $$(\alpha) I \overline{I} \subset \Oo_E,$$
and the claim follows as before.

\end{proof}

\section{Descending K3 surfaces} \label{descending section}
In this section, we prove the main result of the paper. Let $X/ \C$ be a K3 surface with CM by $\Oo_E$. We fix an ideal $I \subset \D_X$ such that 
\begin{itemize}
\item $ \overline{I} = I$;
\item The map $\mu(E) \rightarrow (\Oo_E / I)^{\times}$ is injective.
\end{itemize}

\begin{rmk} \label{remark (3)}
Since any finite subgroup $G \subset \Gl(n,\Z)$ injects into $\Gl(n, \Z / 3\Z)$ via the natural reduction map, one can always choose $I = (3)\D_X$ for any K3 surface $X$.
\end{rmk}
Our main result is the following. 
\begin{thm} \label{DescendingI}
Let $X / \C$ be a principal K3 surface with CM and reflex field $E \subset \C$. Then $X$ admits a model $X_I$ over $K := F_{I}(E)$, such that $G_K$ acts trivially on $\NS(\overline{X}_I)$ and $T(\overline{X}_I)[I]$.  
Moreover, $X_I/K$ is uniquely determined by the following  property: if $Y$ is a K3 surface over a number field $L$, with CM over $L$, such that $Y_{\C} \cong X$, and $G_L$ acts trivially on $T(\overline{Y})[I]$ and $\NS(\overline{X})$, then $F_{I}(E) \subset L$ and $X_{I,L} \cong Y$. 
\end{thm}

\begin{rmk}
Since $G_K$ acts trivially on $T(\overline{Y})[I]$, it acts trivially also on $T(\overline{X}_I)[\D_X] \cong D_{\overline{X}_I}$, since $I \subset \D_X$. 
\end{rmk}
In case $X$ has big discriminant, we can choose $I = \D_X$ in the theorem above. This, together with the above remark, leads to the following corollary.

\begin{cor}[Canonical models] \label{bigggg}
Let $X / \C$ be a K3 surface with complex multiplication by the ring of integers of a CM field and denote by $E \subset \C$ its reflex field. Assume that $X$ has big discriminant. Then $X$ admits a model $X^{\text{can}}$ over $K=F_{\D_X}(E)$, the K3 class field of $E$ modulo the discriminant ideal $\D_X$. Moreover, $X^{\text{can}}/K$ is uniquely determined by the following property: if $Y$ is a K3 surface over a number field $L$, with CM over $L$, such that $Y_{\C} \cong X$ and $\rho(Y/L) = \rho(X/ \C)$, then $F_{\D_X}(E) \subset L$ and $X^{\text{can}}_{L} \cong Y$. 
\end{cor}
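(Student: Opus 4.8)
The plan is to deduce Corollary~\ref{bigggg} from Theorem~\ref{Descendingbig} by taking the level $I=\D_X$, so that all the content lies in Theorem~\ref{Descendingbig}. When $X$ has big discriminant the ideal $\D_X$ satisfies the hypotheses on $I$ imposed in Theorem~\ref{Descendingbig}: one has $\overline{\D_X}=\D_X$, and by Proposition~\ref{discriminantP}(2) the injectivity of $\mu(E)\to(\Oo_E/\D_X)^{\times}$ is exactly the assertion that $X$ has big discriminant. Theorem~\ref{Descendingbig} then provides $X^{\text{can}}:=X_{\D_X}$ over $K:=F_{\D_X}(E)$ with $\rho(X^{\text{can}}/K)=\rho(X/\C)$. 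For the universal property I would only need to check that the corollary's hypotheses imply those of Theorem~\ref{Descendingbig}: if $Y/L$ has CM over $L$, $Y_{\C}\cong X$ and $\rho(Y/L)=\rho(X/\C)$, then every $\tau\in G_L$ fixes $\NS(\overline{Y})$, so $d_N(\tau^*|_{\NS})=\id$, and by the compatibility $d_T(\rho(\tau))=d_N(\tau^*|_{\NS})$ recorded after Theorem~\ref{MTCM} we get $d_T(\rho(\tau))=\id$, i.e.\ $\rho(\tau)$ acts trivially on $D_{Y}\cong T(\overline{Y})[\D_Y]$. Since $\D_Y=\D_X$ (the discriminant ideal is an invariant of the complex surface, Proposition~\ref{discriminantP}), $G_L$ acts trivially on $T(\overline{Y})[\D_X]$, and Theorem~\ref{Descendingbig} yields $F_{\D_X}(E)\subseteq L$ and $X^{\text{can}}_L\cong Y$, which is the statement.

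The substance is thus Theorem~\ref{Descendingbig}, which I would prove by Weil descent. By Piatetski-Shapiro--Shafarevich \cite{MR0284440} (see also \cite{Baldi2019}) $X$ is defined over a number field, and after a finite extension I may fix a finite Galois extension $L'/E$ over which $X$ has a model with CM over $L'$, with $\rho(X/L')=\rho(X/\C)$, and with $G_{L'}$ acting trivially on $T(\overline{X})[I]$ (each condition is reached over a finite extension). Point~(4) of Section~\ref{Preliminaries} then forces $F_I(E)\subseteq L'$, and I set $K:=F_I(E)$. For $\sigma\in\Gal(L'/K)$, Theorem~\ref{MTCM} describes the action of $\sigma$ on $T(\overline{X})_{\A_f}$ as multiplication by $u_\sigma\,\tfrac{s}{\overline{s}}$ with $u_\sigma\in U(\Q)$; since $\sigma$ fixes $K=F_I(E)$, the description of the norm group $S_I$ in point~(1) forces $u_\sigma\tfrac{s}{\overline{s}}$ to preserve the $\Oo_E$-lattice and to be $\equiv 1\bmod I$. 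By point~(4) there is an integral Hodge isometry $f_\sigma\colon T(\overline{X})\xrightarrow{\sim}T(\overline{X^\sigma})$ with $f_\sigma\,\sigma^*|_{T[I]}=\id$, and injectivity of $\mu(E)\to(\Oo_E/I)^{\times}$ makes $f_\sigma$ unique. Since $I\subseteq\D_X$, the congruence $u_\sigma\tfrac{s}{\overline{s}}\equiv 1\bmod I$ forces $f_\sigma\,\sigma^*|_{T}$ to act trivially on the discriminant group $D_{X^\sigma}$, and combined with the compatibility above this gives $d_T(f_\sigma)=d_N\big((\sigma^*|_{\NS})^{-1}\big)$; Lemma~\ref{lemma} then glues $f_\sigma$ and $(\sigma^*|_{\NS})^{-1}$ to a unique integral Hodge isometry $G_\sigma\colon H^2_B(\overline{X},\Z(1))\xrightarrow{\sim}H^2_B(\overline{X^\sigma},\Z(1))$. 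As Galois conjugation carries ample classes to ample classes, $G_\sigma$ takes the ample cone of $\overline{X}$ into that of $\overline{X^\sigma}$, so by the Torelli theorem $G_\sigma$ is induced by a unique isomorphism $\overline\phi_\sigma\colon\overline{X^\sigma}\xrightarrow{\sim}\overline{X}$.

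The step I expect to be the crux is checking that $\{\overline\phi_\sigma\}_{\sigma\in\Gal(L'/K)}$ is an effective descent datum. I would verify the cocycle identity $\overline\phi_{\sigma\tau}=\overline\phi_\sigma\circ{}^\sigma\overline\phi_\tau$ on $H^2_B$ by the Torelli theorem: on the N\'{e}ron-Severi part both sides are the tautological identifications induced by $\sigma\mapsto\sigma^*|_{\NS}$, which visibly cocycle, while on the transcendental part both composites satisfy $(\,\cdot\,)\circ(\sigma\tau)^*|_{T[I]}=\id$ and therefore coincide by the uniqueness coming from injectivity of $\mu(E)\to(\Oo_E/I)^{\times}$; hence the induced isometries agree, and so do the automorphisms, by injectivity of $\Aut(\overline{X})\hookrightarrow O(H^2_B(\overline{X},\Z(1)))$. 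Effectivity is then automatic, $X$ being quasi-projective and $L'/K$ finite. This produces $X_I/K$; by construction $G_K$ acts trivially on $\NS(\overline{X_I})$, giving $\rho(X_I/K)=\rho(X/\C)$, and acts on $T(\overline{X_I})_{\widehat{\Z}}$ by multiplication by $u_\sigma\tfrac{s}{\overline{s}}\equiv 1\bmod I$, giving triviality on $T(\overline{X_I})[I]$.

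Finally, for the universal property, let $Y/L$ be as in the statement; then $E\subseteq L$ since $Y$ has CM over $L$. Each $\tau\in\Aut(\C/L)$ fixes $Y$, so $\tau^*$ acts trivially on $T(\overline{Y})[I]$ by hypothesis; taking $f=\id$ under an isomorphism $X\cong Y_{\C}$ then exhibits $\tau$ in the subgroup of $\Aut(\C/E)$ whose fixed field is $F_I(E)$ (point~(4)), whence $F_I(E)\subseteq L$. Now $X_{I,L}$ and $Y$ are two models over $L$ of the same complex K3 surface, with the same (full) $G_L$-action on $\NS$ and the same triviality on $T[I]$; running the gluing-and-Torelli recipe once more and invoking the uniqueness forced by injectivity of $\mu(E)\to(\Oo_E/I)^{\times}$, any $\C$-isomorphism $(X_{I,L})_{\C}\to Y_{\C}$ can be chosen $G_L$-equivariant and hence descends, so $X_{I,L}\cong Y$.
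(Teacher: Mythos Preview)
Your deduction of the corollary from Theorem~\ref{Descendingbig} by taking $I=\D_X$ is exactly the paper's argument: the paper's one-line proof is ``choose $I=\D_X$ and use the preceding remark,'' and your first paragraph spells out that remark (that $\rho(Y/L)=\rho(X/\C)$ forces $d_N(\tau^*|_{\NS})=\id$, hence $d_T(\rho(\tau))=\id$ by~(\ref{agree on discriminant}), hence triviality on $T(\overline{Y})[\D_X]$).

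Your supplementary sketch of Theorem~\ref{Descendingbig} is also the paper's proof in substance: uniqueness of the integral Hodge isometry $T(X)\to T(X^\sigma)$ congruent to $\sigma^*$ mod $I$ via injectivity of $\mu(E)\to(\Oo_E/I)^\times$, gluing with $\sigma^*|_{\NS}$ by Lemma~\ref{lemma}, Torelli, then descent. The only difference is organizational: you first fix a model over a finite extension $L'/K$ and run finite Galois descent over $\Gal(L'/K)$, whereas the paper builds $f_\tau\colon X^\tau\to X$ directly for all $\tau\in\Aut(\C/K)$ from the main theorem of CM (phrasing the uniqueness as constancy of the map $E(s)\ni e\mapsto e/\overline{e}$). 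One small point to make explicit in your version: the isomorphisms $\overline\phi_\sigma$ you produce by Torelli live over $\C$, and Weil descent from $L'$ to $K$ needs them over $L'$; this is precisely Corollary~\ref{corollary}, which applies because you arranged $G_{L'}$ to act trivially on $\NS$ and on $T[I]$ for both $X$ and $X^\sigma$. Also, your invocation of ``Theorem~\ref{MTCM}'' for $\sigma\in\Gal(L'/K)$ is really the relative form used at the start of the paper's proof of Theorem~\ref{Descendingbig} (giving $\eta(s)\colon T(X)_\Q\to T(X^\tau)_\Q$), not the absolute statement for $\tau\in G_L$; the content is the same, but the citation should point there.
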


In order to prove \ref{DescendingI} we shall construct a Galois descent data for $X$ over $K$ using the global Torelli Theorem and the main theorem of complex multiplication. Before this, we need to study the field of definition of isomorphisms.  
\begin{prop} \label{descendingiso}
Let $X,Y / L$ be two principal K3 surfaces with complex multiplication over a number field $L$, with $L \subset \C$, and suppose that $\overline{X}$ and $\overline{Y}$ are isomorphic. 
Then an isomorphism $f \colon \overline{X} \rightarrow \overline{Y}$ is defined over $L$ if and only if the induced maps 
$$ f^* \colon \NS(\overline{Y}) \rightarrow \NS(\overline{X}) $$
and
$$ f^* \colon T(\overline{Y})[I] \rightarrow T(\overline{X})[I] $$ 
are $G_L$-invariant.
\end{prop}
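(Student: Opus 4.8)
The plan is to use the standard criterion for descent of morphisms: a morphism $f\colon \overline X\to\overline Y$ between varieties defined over $L$ descends to $L$ if and only if it is Galois-equivariant, i.e.\ $f^\tau = f$ for all $\tau\in G_L$ (here $f^\tau := \tau\circ f\circ\tau^{-1}$, using the fixed models of $X$ and $Y$ over $L$). Since $X$ and $Y$ are projective, effective descent along $\overline{\C}/L$ — or rather along $\overline L/L$ after first noting everything is already defined over a number field — is automatic, so the content is entirely in showing that the cohomological hypotheses force $f = f^\tau$. The forward implication is immediate: if $f$ is defined over $L$ then $f^\tau = f$, hence the induced maps on $\NS$ and on $T[I]$ commute with the Galois action. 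The real work is the converse.

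For the converse, fix $\tau\in G_L$ and consider $g := (f^\tau)^{-1}\circ f\colon \overline X\to\overline X$. This is an automorphism of $\overline X$, and I would first argue $g = \id$. The hypothesis that $f^*$ is $G_L$-equivariant on $\NS(\overline Y)$ says precisely that $g$ acts trivially on $\NS(\overline X)$ (equivalently on $H^2$ restricted to the algebraic part). The hypothesis on $T(\overline Y)[I]$ says $g^*$ acts trivially on $T(\overline X)[I]$. Now use the main theorem of complex multiplication (Theorem \ref{MTCM}): since $X$ has CM over $L$, the action of $\tau$ on $T(\overline X)_{\widehat\Z}$ is, after correcting by the $\NS$-action, given by multiplication by $u\,s/\overline s\in U_E(\A_f)$ for suitable $u\in U(\Q)$ and $s = \mathrm{Nm}_{L/E}(t)$. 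Comparing the CM descriptions of the Galois action on $T(\overline X)$ and on $T(\overline Y)$ through the isometry $f^*$, the discrepancy $g^*|_{T(\overline X)}$ is itself multiplication by some root of unity $\mu\in\mu(E) = \mu(X)$ (an integral Hodge isometry of the rank-one $\Oo_E$-module $T(\overline X)_\Q$ that respects the $\widehat\Z$-structure and the Hodge structure must be a root of unity in $E$). The condition that $g^*$ acts trivially on $T(\overline X)[I]$ then says $\mu \equiv 1 \bmod I$, i.e.\ $\mu$ lies in the kernel of $\mu(E)\to(\Oo_E/I)^\times$; by our standing assumption on $I$ this map is injective, so $\mu = 1$ and $g^*$ is the identity on $T(\overline X)$.

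Having shown $g^*$ is the identity on both $\NS(\overline X)$ and $T(\overline X)$, it is the identity on $H^2_B(\overline X,\Z(1)) = H^2_B(X_\C,\Z(1))$ — here one uses that $\NS\oplus T$ has finite index and that an isometry trivial on a finite-index sublattice and on the two orthogonal summands is trivial (more carefully: $g^*$ being trivial on $\NS$ and on $T$, and being an isometry of the unimodular overlattice $\Lambda_{K3}$, forces $g^* = \id$ by Lemma \ref{lemma}, since the induced maps on the discriminant forms agree — both are trivial). By the Torelli theorem for K3 surfaces, an automorphism of $\overline X$ inducing the identity on $H^2_B(X_\C,\Z(1))$ together with the identity on the period (which is automatic, being Hodge) is the identity automorphism. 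Hence $g = \id$, i.e.\ $f^\tau = f$ for all $\tau\in G_L$, and effective descent for the projective morphism $f$ (equivalently, descent of its graph $\Gamma_f\subset \overline X\times\overline Y$ as a closed subscheme) gives a model of $f$ over $L$.

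The main obstacle is the middle step: extracting from Theorem \ref{MTCM} that the ``defect'' automorphism $g^*$ acts on $T(\overline X)$ as multiplication by an element of $\mu(E)$, rather than by something more complicated. One must set this up carefully — transport the CM data on $Y$ to $X$ via $f^*$, observe that both give adelic descriptions of the $\tau$-action differing by an element of $U(\Q)\cap$(units preserving $T(\overline X)_{\widehat\Z}$)$= \mu(E)$, and only then invoke the $T[I]$-hypothesis to kill it. Everything else (the Torelli and Lemma \ref{lemma} bookkeeping, the trivial direction, and the formal descent of a projective morphism) is routine.
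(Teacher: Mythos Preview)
Your argument is correct and follows essentially the same route as the paper: both reduce the question to showing that the defect between the two Galois actions on the transcendental lattice is a root of unity in $E$, via Theorem~\ref{MTCM}, and then kill it using the standing injectivity of $\mu(E)\to(\Oo_E/I)^\times$. The only cosmetic difference is that you package the defect as the automorphism $g=(f^\tau)^{-1}\circ f$ and invoke Torelli explicitly at the end, whereas the paper checks Galois-equivariance of $f^*$ on $H^2_{\et}(-,\widehat\Z(1))$ directly (citing the descent criterion that amounts to the same Torelli input); the computation with $e,c\in U(\Q)$ and the conclusion $e/c\in\mu(E)$, $e/c\equiv 1\bmod I$, hence $e=c$, is identical in substance.
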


\begin{proof}
The only if part of the statement is trivial, so that what we have to prove is that if the natural maps $ \NS(\overline{Y}) \rightarrow \NS(\overline{X})$ and $T(\overline{Y})[I] \rightarrow T(\overline{X})[I] $ are Galois invariant, then $f$ is defined over $L$. 
Recall that $f$ is defined over $L$ if and only if the induced map $f^* \colon \mathrm{H}^2_{\et}(\overline{Y}, \widehat{\Z})(1) \rightarrow \mathrm{H}^2_{\et}(\overline{X}, \widehat{\Z})(1)$ is $G_L$-invariant, since the natural morphism of $G_L-$modules
$$\Aut(\overline{X}) \rightarrow \Aut(\mathrm{H}_{\et}^2(\overline{X}, \widehat{\Z}))$$ is injective (see Chapter 15, Remark 2.2. of \cite{MR3586372}). In order to check that $f^*$ is Galois invariant, we break it into two parts: $f^*_{T} \colon \widehat{T}(\overline{Y}) \rightarrow \widehat{T}(\overline{X})$ and $f^*_{N} \colon \NS(\overline{Y}) \rightarrow \NS(\overline{X})$. For any $\tau \in G_{L}$, it thus suffices to prove the commutativity of the following two squares:

\begin{center}
\begin{tikzcd}
\widehat{T}(\overline{Y}) \arrow[d, "\tau_Y^*|_T"] \arrow[r, "f^*_T "] & \widehat{T}(\overline{X})  \arrow[d, "\tau_X^*|_T"]  \\
\widehat{T}(\overline{Y}) \arrow[r, "f^*_T"] & \widehat{T}(\overline{X}).
\end{tikzcd}
\,\,\,\,\,\,\,\,\,\,\,\, and \,\,\,\,\,\,\,\,\,\,\,\,
\begin{tikzcd}
\NS(\overline{Y}) \arrow[d, "\tau_Y^*|_{\NS}"] \arrow[r, "f^*_{N}"] & \NS(\overline{X}) \arrow[d, "\tau_X^*|_{\NS}"]  \\
\NS(\overline{Y}) \arrow[r, "f^*_{N}"] & \NS(\overline{X}) .
\end{tikzcd} 
\end{center}
The latter commutes by assumption, so that we can focus on the first. Since $X$ and $Y$ are geometrically isomorphic, the fields $E$, $E(X)$ and $E(Y)$ are naturally identified (i.e., the same field acts on $T(X_\C)_\Q$ and $T(Y_\C)_\Q )$. Let $s \in \A^{\times}_E$ be as in Theorem \ref{MTCM}, and let $e,c \in U(\Q)$ be the unique elements such that $\tau_X^* = e \frac{s}{\overline{s}}$ and $\tau_Y^* = c \frac{s}{\overline{s}}$. The map $f_T^*$ is $\A_E$-linear, so the commutativity condition $$(f^*_T) ^{-1} \circ e \frac{s}{\overline{s}} \circ (f^*_T) =  c \frac{s}{\overline{s}}$$ simply amounts to $e = c$. But both $ e \frac{s}{\overline{s}}$ and  $c \frac{s}{\overline{s}}$ respect the $\widehat{\Z}$-lattice $\widehat{T}(\overline{Y}) $, so $e / c$ must do the same. This, together with the fact that $e \overline{e} = c \overline{c} =1$, implies that $e / c$ is a root of unity, i.e. an integral Hodge isometry of $T(\overline{Y})$. By assumptions, the induced map $T(\overline{Y})[I] \rightarrow T(\overline{X})[I] $ is Galois equivariant, therefore $e/c \equiv 1 \mod I$. Since we chose $I$ such that $\mu(E) \rightarrow (\Oo_E / I)^{\times}$ is injective, we conclude that $e=c$. 
\end{proof}

\begin{rmk}
In case $\overline{X}$ has big discriminant, the proposition says that an isomorphism $f \colon \overline{X} \rightarrow \overline{Y}$ is defined over $L$ if and only if the induced map $ f^* \colon \NS(\overline{Y}) \rightarrow \NS(\overline{X}) $ is $G_L$- equivariant. 
\end{rmk}

\begin{cor} \label{corollary}
Let $X,Y / L$ be two principal K3 surfaces with CM over a number field $L$, and suppose that $\overline{X}$ and $\overline{Y}$ are isomorphic. Suppose, moreover, that $X$ the $G_L$ modules $\NS(\overline{X})$, $\NS(\overline{Y})$, $T(\overline{Y})[I]$ and $ T(\overline{X})[I] $ are trivial. Then every isomorphism $f \colon \overline{X} \rightarrow \overline{Y}$ is already defined over $L$. 
\end{cor}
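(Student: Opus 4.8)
The plan is to simply invoke Proposition \ref{descendingiso}. That proposition reduces the question of whether an isomorphism $f\colon \overline{X}\to\overline{Y}$ is defined over $L$ to checking that the two induced maps $f^*\colon \NS(\overline{Y})\to\NS(\overline{X})$ and $f^*\colon T(\overline{Y})[I]\to T(\overline{X})[I]$ are $G_L$-equivariant. So the entire content is to verify these two equivariance conditions under the standing hypotheses.

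First I would record that, by assumption, $G_L$ acts trivially on each of $\NS(\overline{X})$, $\NS(\overline{Y})$, $T(\overline{Y})[I]$ and $T(\overline{X})[I]$. A morphism of abelian groups between two modules on which a group acts trivially is tautologically equivariant: for $\tau\in G_L$ and $v$ in the source, $\tau\cdot f^*(v)=f^*(v)=f^*(\tau\cdot v)$, the first equality because $G_L$ acts trivially on the target and the last because it acts trivially on the source. Hence both maps $f^*$ automatically satisfy the hypothesis of Proposition \ref{descendingiso}. (One should note in passing that $f$ does induce well-defined maps on $\NS$ and on the torsion subgroups $T(\cdot)[I]$: this is functoriality of étale cohomology together with the fact that an isomorphism of K3 surfaces carries the Néron–Severi sublattice and the transcendental sublattice of one to those of the other, so it is automatic.)

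Applying Proposition \ref{descendingiso} then yields that $f$ is defined over $L$, which is the assertion of the corollary. I would also remark that one does not really need all four modules to be trivial $G_L$-modules; it suffices that the two homomorphisms $f^*$ be equivariant, and triviality of source and target of each is just the cleanest sufficient condition, which is the form in which it will be used later.

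There is essentially no obstacle here — the statement is an immediate formal consequence of Proposition \ref{descendingiso}. The only point to keep straight is that "the $G_L$-module $M$ is trivial" is meant in the sense that the $G_L$-action on $M$ is trivial (not that $M$ is trivial as an abstract group), since it is precisely this reading that makes the equivariance of $f^*$ free of charge.
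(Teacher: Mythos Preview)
Your proof is correct and is exactly the approach the paper takes: the corollary is stated there as an immediate consequence of Proposition~\ref{descendingiso}, with no further argument given. Your observation that a homomorphism between trivial $G_L$-modules is automatically equivariant is precisely the (one-line) content needed.
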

We recall that a Galois-descent data for $X/ \overline{\Q}$ over a number field $K \subset \overline{\Q}$ consists of an isomorphism $f_{\tau} \colon X^\tau \xrightarrow{\sim} X$ for any $\tau \in G_K$, such that  $f_{\sigma \tau}$ is the composition $$f_{\sigma \tau} \colon X^{\sigma \tau} = (X^\tau)^\sigma \xrightarrow{f_{\tau}^\sigma} X^\sigma \xrightarrow{f_\sigma} X,$$ for any $\sigma, \tau \in G_K$ (e.g., see \cite[4.4.4]{poonen2017rational}). Any model of $X$ over $K$ gives rise to a Galois descent data: let $Y/K$ be a variety with an isomorphism $f \colon Y_{\overline{\Q}} \xrightarrow{\sim} X.$ For any $\tau \in G_K$ the conjugate $Y_{\overline{\Q}}^\tau$ is equal to  $Y_{\overline{\Q}}$ itself, because $Y$ is the base-change of a $K$-scheme. So we obtain a second isomorphism $f^\tau \colon Y_{\overline{\Q}}^\tau = Y_{\overline{\Q}} \xrightarrow{\sim} X^\tau $ and we define $f_{\tau} \coloneqq f^\sigma \circ f^{-1}.$ In our situation, it follows from Corollary 4.4.6. and Remark 4.4.8. of \cite{poonen2017rational} that also the converse is true: to any Galois-descent data for $X$ over $K$ there exists associated a scheme $Y/K$ with an isomorphism $Y_{\overline{\Q}} \cong X$. We can now proceed to prove Theorem \ref{DescendingI}.

\begin{proof}[Proof of Theorem \ref{DescendingI}]
Let $\tau \in \Aut(\C / E)$ and $s \in \A_{E,f}^{\times}$ be such that $\art_{E}(s) = \tau_{|E^{\textit{ab}}}$. From Theorem \ref{MTCM}, there is a unique rational Hodge isometry $\eta(s) \colon T(X)_{\Q} \rightarrow T(X^{\tau})_{\Q}$ such that the following diagram commutes: 

\begin{center}
\begin{tikzcd}[row sep= large, column sep =large]
\widehat{T}(X)_{\Q} \arrow[r, "\eta(s) \otimes \A_{f}"] & \widehat{T}(X^{\tau})_{\Q} \\
\widehat{T}(X)_{\Q} \arrow[u, "\frac{s}{\overbar{s}}"]\arrow[ur, "\tau^*_{|T}"].
\end{tikzcd} 
\end{center}
Our first step to construct the various $f_\tau$'s amounts to determine those $\tau \in \Aut(\C / E)$ for which there exists $s \in \A_{E,f}^{\times}$ such that the rational map $\eta(s)$ is actually \textit{integral} and extends to a global Hodge isometry between $H^2_B(X, \Z)(1)$ and $H^2_B(X^\tau, \Z)(1)$. Let $e \in E^{\times}$. Since $\art_{E}(s) = \art_{E}(es)$, if we operate the substitution $s \mapsto es$ we must obtain that

\begin{equation} \label{equazione eta}
    \eta(s) = \frac{e}{\overline{e}}\eta(es)
\end{equation}

Suppose now that we can find $e \in E^{\times}$ that satisfies the following properties: 
\begin{enumerate}
\item $\frac{e s}{\overline{e} \overline{s}} \in \widehat{\Oo}^\times_E$
\item $\frac{e s}{\overline{e} \overline{s}} \equiv 1 \mod I$
\end{enumerate}
and for any $s$ denote by $E(s) \subset E^\times$ the set of elements $e \in E^\times$ satisfying (1) and (2) above.
\begin{claim}
If $s$ is such that $E(s)$ is not empty, then the map
\begin{align} \label{per i posteri}
 E(s) &\rightarrow U(\Q)  \\
 e &\mapsto \frac{e}{\overline{e}} \nonumber
\end{align} 
is constant. 
\end{claim}  
\begin{proof}[Proof of Claim]

Indeed, let $e, f \in E(s)$ and put $x:=\frac{e \overline{f}}{\overline{e} f}$. By the first point above, we have that $x \Oo_E = \Oo_E,$ i.e. $x \in \Oo_E^{\times}$. Since $\overline{x} = x^{-1}$, we also have that $x$ is a root of unity. By the second point above, we see that $x \equiv 1 \mod I$. Hence $x=1$, since we have chosen $I$ such that $\mu(E) \rightarrow (\Oo_E /I)^\times$ is injective. 
\end{proof}
It follows that to any element $s \in \A^{\times}_{E,f}$ such that $E(s)$ is not empty we can associate a unique Hodge isometry $$\eta'(s) \colon T(X)_{\Q} \rightarrow T(X^{\tau})_{\Q} $$
and a unique element $\rho(s) \in U(\A_f)$ by putting $\eta'(s) := \eta(es)$ and $\rho(s):= \frac{es}{\overline{es}}$, where $e \in E(s)$ is a random element (by the claim above and \eqref{equazione eta}, both $\eta'(s)$ and $\rho(s)$ do not depend on the choice).

\begin{claim} \label{claim proof descendingI}
The map $\eta'(s)$ is integral, and the isometry $$(\eta'(s), \tau^*) \colon T(X) \oplus \NS(X) \rightarrow T(X^\tau) \oplus \NS(X^\tau)$$ extends to a Hodge isometry $f(s) \colon H^2_{B}(X, \Z(1)) \xrightarrow{\sim} H^2_{B}(X^\tau, \Z(1)),$ where $\tau^* \colon \NS(X) \rightarrow  \NS(X^\tau)$ is the Galois pullback on divisors. 
\end{claim}

\begin{proof}[Proof of Claim]
By construction, the following diagram commutes

\begin{equation} \label{diagramma} 
\begin{tikzcd} 
\widehat{T}(X)_{\Q} \arrow[r, "\eta'(s) \otimes \A_{f}"] & \widehat{T}(X^{\tau})_{\Q} \\
\widehat{T}(X)_{\Q} \arrow[u, "\rho(s)"]\arrow[ur, "\tau^*_{| T}"].
\end{tikzcd} 
\end{equation}
Since for our choice of $s$ we have that $\rho(s) \in \widehat{\Oo}^\times_E$, we notice that both maps $\rho(s)$ and $\tau^*$ are integral, in the sense that $\rho(s)$ restricts to an isometry of $\widehat{T}(X)$, and $\tau^*$ restricts to an isometry $\widehat{T}(X) \xrightarrow{\sim} \widehat{T}(X^\tau)$. This implies that $$\eta'(s) \in \Hom(T(X)_\Q, T(X^\tau)_\Q) \cap \Hom(\widehat{T}(X), \widehat{T}(X^\tau)) \subset \Hom(\widehat{T}(X)_\Q, \widehat{T}(X^\tau)_\Q),$$ i.e., that $\eta(s)'$ determines an integral Hodge isometry $T(X) \xrightarrow{\sim} T(X^\tau)$. To prove the last statement of the claim, we make use of \ref{lemma nikulin} and the Remark \ref{remark nikulin lemma}. We note that the map $\tau^* \colon H^2_{\et}(X, \widehat{\Z}(1)) \rightarrow  H^2_{\et}(X^\tau, \widehat{\Z}(1))$ is an integral (ad\'{e}lic) isometry. Let $D_X$ and $D_{X^\tau}$ be the discriminant groups of $X$ and $X^\tau$ respectively. To avoid any possible confusion, we denote by $\tau^*_{\NS} \colon \NS(X) \rightarrow \NS(X^\tau)$ the induced map on the N\'{e}ron-Severi groups and by $\tau^*_{T} \colon \widehat{T}(X) \rightarrow \widehat{T}(X^\tau)$ the induced map on the profinite transcendental lattices. Then both $\tau^*_{\NS}$ and $\tau^*_{T}$ induce maps $$d_N(\tau^*_{\NS}), d_T(\tau^*_{T})  \colon D_X \rightarrow D_{X^{\tau}},$$ and since $\tau^*_{\NS} \oplus \tau^*_{T}$ extends to a global isometry $H^2_{\et}(X, \widehat{\Z}(1)) \rightarrow  H^2_{\et}(X^\tau, \widehat{\Z}(1))$ we conclude that $d_N(\tau^*_{\NS}) = d_T(\tau^*_{T}).$ Using \eqref{diagramma} we factorize $\tau^*_{T}$ as $$\tau^*_{T} \colon \widehat{T}(X) \xrightarrow{\rho(s)} \widehat{T}(X) \xrightarrow{\eta'(s)} \widehat{T}(X^\tau).$$ Since $\rho(s) \equiv 1 \mod I$ and $I \subset \D_X$ also $\rho(s) \equiv 1 \mod \D_X$ holds. This implies that the map induced by $\rho(s)$ on the discriminant group $D_X$ is the identity, so that $d_T( \tau^*_{T}) = d_T(\eta'(s) )$. In particular $$d_T(\eta'(s) ) = d_N(\tau^*_{\NS}),$$ and the claim follows from \ref{lemma nikulin}. 
\end{proof}
Note that since $\tau^*_{\NS} \colon \NS(X) \rightarrow \NS(X^\tau)$ maps ample classes to ample classes, by the global Torelli theorem there is a unique isomorphism $f_s \colon X^\tau \rightarrow X$ the induces $f(s)$ in cohomology. We are thus very close to something that looks like a descent data. The elements $s \in \A^\times_{E,f}$ such that $E(s)$ is not empty are readily determined:
$$\{ s \in \A_{E,f}^{\times} \colon E(s) \neq \emptyset \} = \lbrace s \in \A^{\times}_{E,f} \colon \exists e \in E^\times \colon \frac{es}{\overline{es}} \mathcal{O}_{E} = \mathcal{O}_{E}, \,\,   \frac{es}{ \overline{es}} \equiv 1 \mod I \rbrace$$
and thanks to Hilbert's Theorem 90 we can write this group as 
$$\lbrace s \in \A^{\times}_{E,f} \colon \exists u \in U(\Q) \colon u\frac{s}{\overline{s}} \mathcal{O}_{E} = \mathcal{O}_{E}, \,\,  u  \frac{s}{ \overline{s}} \equiv 1 \mod I \rbrace,$$
which is exactly the norm group $S_I$ associated to the Abelian field extension $F_{I}(E) / E$ (see \ref{K3 class filds subsection}). Denote this extension by $K$. 
To show that $f_s$ actually depends only on $\tau \in G_K$, consider the commutative diagram 
\begin{center} 
\begin{equation} \label{square proof descending}
\begin{tikzcd} 
\A_{K,f}^{\times} \arrow[d, twoheadrightarrow, "\mathrm{Nm}_{K/E}"] \arrow[r, twoheadrightarrow, "\art_K"] & G_K^{\textit{ab}} \arrow[d, "\res_{K/E}"]  \\
S_I \arrow[r, "\art_E | S_I"] & G_E^{\textit{ab}}.
\end{tikzcd}
\end{equation}
\end{center}
The map $\rho \colon S_I \rightarrow U(\A_f)$ constructed before is continuous and has the property that $\rho(E^\times) = 1 $. Therefore, it factorises through the profinite completion of $S_I / E^\times$ which is canonically isomorphic to $\art_E(S_I) = \res(G_K^{ab})$. This means that the next dotted arrow can be uniquely filled in a way that the following diagram commutes:

\begin{center}
\begin{tikzcd}
\A_{K,f}^{\times} \arrow[d, twoheadrightarrow, "\mathrm{Nm}"] \arrow[r, twoheadrightarrow, "\art_K"] & G_K^{\textit{ab}} \arrow[d, "\res"] \arrow[ddr, bend left, dotted]  \\
S_I \arrow[r, "{\art_{E}}_{| S_I}"] \arrow[drr, bend right, "\rho"] & G_E^{\textit{ab}} \\
& & U(\A_f)
\end{tikzcd}
\end{center}

We still denote by $\rho \colon G_K^{ab} \rightarrow U(\A_f)$ the map obtained by filling the dotted arrow. Consider the diagram \eqref{diagramma}. We have just seen that the association $s \mapsto \rho(s)$ depends only on $\tau \in G_K^{ab}$, therefore also $\eta'(s) = \tau^*|_T \circ \rho(s^{-1})$ depends only on $\tau$. Thus $f(s)$ from Claim \ref{claim proof descendingI} depends only on $\tau$, and finally also $f_s = f_\tau$. To conclude that this is actually a Galois-descent data, we only need to check the cocycle condition. The proof is straightforward: for $\tau \in G_K$ and $X/ \overline{\Q}$ denote by $\tau_X$ the natural morphism of schemes $\tau_X \colon X^\tau \rightarrow X.$ Note that since $E \subset K$ the reflex field of $X$ and $X^\tau$ are identified, meaning that the same field $E$ acts both on $X$ and $X^\tau$. This implies that any isomorphism $f \colon X \rightarrow X^\tau$ is $E$-linear on the transcendental part of the cohomology. We extend the action of $E$ on the whole $H^2(X, \Q)(1)$ by letting it be the identity on $\NS(X)$ (and similarly we extend the action of $\A_{E,f}$ on the whole $H_{\et}^2(X, \A_f)(1)$). By construction, $f_\tau$ is the unique isomorphism $X^\tau \xrightarrow{\sim} X$ such that $\tau_X^* = f(\tau) \rho(\tau),$ where $\tau_X^* \colon H_{\et}^2(X, \A_f)(1) \rightarrow H_{\et}^2(X^\tau, \A_f)(1)$ is the map induced in cohomology.

\begin{claim}
For $\sigma, \tau \in G_K$ the isomorphism $ f_\sigma \circ f_\tau^\sigma \colon X^{\sigma \tau} \xrightarrow{\sim} X$ satisfies $$(\sigma \tau)_X^* \circ \rho(\sigma \tau)^{-1} = f_\sigma \circ f_\tau^\sigma.$$ In particular, $f_{\sigma \tau} = f_\sigma \circ f_\tau^\sigma$ by the unicity above. 
\end{claim}
\begin{claimproof}
From the commutative square 

\begin{center}
\begin{tikzcd}
X^\tau \arrow[r, "f_\tau"] & X   \\
X^{\sigma \tau} \arrow[u, "\sigma_{X^\tau}"]  \arrow[r, "f^\sigma_\tau"] & X^\sigma \arrow[u, "\sigma_X"]
\end{tikzcd}
\end{center}
one deduces that $(f^\sigma_{\tau})^* \colon H^2_\et(X^\sigma, \A_f)(1) \rightarrow H^2_\et(X^{\sigma \tau}, \A_f)(1)$ is given by $$(f^\sigma_{\tau})^* = \sigma_{X^{\tau}}^* \circ f(\tau) \circ {\sigma^{-1}_X}^*$$
so that
$$(f^\sigma_{\tau})^* \circ f^*_\sigma = \sigma(X^\tau)^* \circ f(\tau) \circ {\sigma^{-1}_X}^* \circ f(\sigma).$$
Writing $\sigma_X^* = f(\sigma) \rho(\sigma) $ we obtain 
$$(f^\sigma_{\tau})^* \circ f^*_\sigma = \sigma_{X^\tau}^* \circ f(\tau) \circ \rho(\sigma^{-1}).$$
Finally, we write $\sigma_{X^\tau}^* = f_{X^\tau}(\sigma) \rho_{X^\tau}(\sigma),$ with $f_{X^\tau}(\sigma)$ obtained as $f_\sigma$ but starting from $X^\tau$ instead that from $X$ itself. By the fact that the CM fields of $X$ and $X^\tau$ are naturally identified, we can identify $\rho_{X^\tau}(\sigma)$ with $\rho(\sigma)$, and by the linearity mentioned before stating the Claim, we conclude that 

\begin{equation} \label{cocycle}
(f^\sigma_{\tau})^* \circ f^*_\sigma =  f_{X^\tau}(\sigma) \circ f(\tau).    
\end{equation}

Consider now $(\sigma \tau)_X \colon X^{\sigma \tau} \rightarrow X$, and factorize it as $X^{\sigma \tau} \xrightarrow{\sigma_{X^\tau}} X^\tau \xrightarrow{\tau_X} X.$
It follows that $$(\sigma \tau)_X^* = f(\sigma \tau) \rho(\sigma \tau) = f_{X^\tau}(\sigma) \rho_{X^\tau}(\sigma) f(\tau) \rho(\tau).$$
Employing once again the linearity of the action of $E$ with respect to any Hodge isometry, we write 
$$ f_{X^\tau}(\sigma) \rho_{X^\tau}(\sigma) f(\tau) \rho(\tau) =  f_{X^\tau}(\sigma) f(\tau) \rho(\sigma \tau)$$
and we obtain 
$$ f_{\sigma \tau}^* = (\sigma \tau)_X^* \rho(\sigma \tau)^{-1} =   f_{X^\tau}(\sigma) f(\tau)  =^{\eqref{cocycle}} (f^\sigma_{\tau})^* \circ f^*_\sigma $$
and therefore 
$$f_{\sigma \tau} = f_\sigma f^\sigma_\tau .$$
\end{claimproof}
It follows that the assignment $\tau \mapsto f_\tau$ defines a Galois-descent data, and we employ this descent data to build the model $X_I$ of $X$ over $K$. Since by construction $f_{\tau}^{*} \mathcal{L} = \tau^* \mathcal{L}$ for every $\mathcal{L} \in \NS(X)$, we conclude that $G_K$ acts trivially on $\NS(\overline{X_I})$, i.e. $\rho(X_I) = \rho(X)$. In the same fashion, since $\tau^*|_T$ and $\eta'(\tau)$ agree modulo $I$, we have that $G_K$ acts trivially on $T(\overline{X}_I)[I]$ as well. The property is a direct consequence of Proposition \ref{descendingiso} and Corollary \ref{corollary}.
\end{proof}

\begin{rmk} \label{Galois rep canon model}
Similarly, one checks that the Galois representation $$ G_K \rightarrow \Aut(\widehat{T}(\overline{X}_I))$$ is given by $\rho \colon G_K \rightarrow U(\A_f).$
\end{rmk}
\begin{exs}
Let $X/\C$ be the Fermat quartic $x^4 + y^4 + w^4 + z^4 = 0$. Then $X$ has CM by $\Z[i]$, and with an appropriate choice of a basis, the transcendental lattice $T(X)$ can be represented by the quadratic form
\( \begin{bmatrix}
8 & 0 \\
0 & 8 \\
\end{bmatrix} \). One can show that the type of $X$ is $(\Z[i],4)$. Hence, the discriminant ideal of $X$ is $8 \Z[i]$, since the different ideal of $E$ is $2 \Z[i]$. Since $\rho(X) = 20,$ one can use the results in \ref{K3 class filds subsection} to determine the field $F_{\D_X}(E)/E$ easily. Using MAGMA, we found that  $F_{\D_X}(E)/E = E(\sqrt{2}) = \Q(\epsilon)$, where $\epsilon$ is a primitive eight root of unity. It is a classical fact that all the divisors of $V(x^4 + y^4 + w^4 + z^4) \subset \Pp^1_\Q$ are defined over $\Q(\epsilon),$ and we conclude that the canonical model of $X$ is nothing but the Fermat quartic over $\Q(\epsilon)$.
\end{exs} 
Since K3 surfaces with big discriminant can be descended canonically, we would like to understand how many there are. We start by considering principal K3 surfaces with complex multiplication by an imaginary quadratic field.
\begin{thm} \label{Classification20}
Let $X/\C$ be a principal K3 surface withs complex multiplication by an imaginary quadratic field $E$, so that $\rho(X) = 20$. Then $X$ has big discriminant unless 
\begin{itemize}
\item $E=\Q(i)$ and the type of $X$ is $(\Oo_E,1)$ (i.e., $T(X) \cong$ \( \begin{bmatrix}
2 & 0 \\
0 & 2 \\
\end{bmatrix} \)).
\item $E= \Q(\sqrt{-3})$ and the type of $X$ is  $(\Oo_E, 1)$ (i.e., $T(X) \cong$ \( \begin{bmatrix}
1 & 2 \\
2 & 1 \\
\end{bmatrix} \)).
\end{itemize}
\end{thm}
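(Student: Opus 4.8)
The plan is to translate the condition ``$X$ has big discriminant'' into a concrete statement about the finite quadratic form $q_X$ and then enumerate the finitely many possibilities. Since $E$ is imaginary quadratic, $\mu(E) = \{\pm 1\}$ unless $E = \Q(i)$ (where $\mu(E) = \mu_4$) or $E = \Q(\sqrt{-3})$ (where $\mu(E) = \mu_6$). Thus the map $\mu(E) \to (\Oo_E / \D_X)^\times$ fails to be injective precisely when some nontrivial root of unity $\zeta$ acts trivially on $D_X \cong \Oo_E / \D_X$. First I would reduce to understanding when $-1$ can act trivially: since $-1 \equiv 1 \bmod \D_X$ forces $\D_X \mid 2$, i.e. $\D_X \in \{(1), \mathfrak{p}_2, (2)\}$ for the prime(s) above $2$, this already bounds $|D_X| = \mathrm{Nm}(\D_X) \le 4$, hence $|\mathrm{disc}(T(X))| \le 4$, and $T(X)$ is a positive-definite even rank-$2$ lattice of very small discriminant. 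For $\zeta$ of order $4$ or $6$ one gets even stronger constraints, essentially forcing $\D_X = \Oo_E$ and $T(X)$ unimodular.

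Next I would run the enumeration. By Proposition~\ref{discriminantP}, $\D_X = (\alpha) I \overline I \D_E$ where $(I,\alpha)$ is the type; replacing the type in its equivalence class I may assume $I$ is integral and, since $\Oo_E$ is a PID in all the cases surviving the discriminant bound (class number one), that $I = \Oo_E$, so $\D_X = (\alpha)\D_E$ with $\alpha \in F = \Q$ totally positive. Then I would list, for each of the finitely many $E$ with $|\mathrm{disc}| \le 4$ worth of room, the even positive-definite binary forms $T(X)$ realizable with CM by $\Oo_E$: these are exactly the forms $\mathrm{tr}_{E/\Q}(\alpha x \bar y)$ on $\Oo_E$, and computing $D_X$ and the action of $\mu(E)$ on it is a direct check. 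The cases $E = \Q(\sqrt{d})$ with $d \ne -3,-4$: here $\mu(E) = \{\pm1\}$, and $-1$ acts as $-\mathrm{id}$ on $D_X$, which is trivial only if $2 D_X = 0$; combined with $\D_X \subseteq \D_{E/F} = \D_E$ from Proposition~\ref{keydiscriminant} and $\D_E \nmid 2$ unless $d = -8$ (and even then $\D_E = (2\sqrt{-2})$ has norm $8 > 4$), one sees $-1$ never acts trivially, so every such $X$ has big discriminant. For $E = \Q(i)$: the only way $\mu_4 \to (\Oo_E/\D_X)^\times$ fails injectivity is $\D_X \mid (1+i)^k$ with $k$ small, and checking shows the unique failure is $\D_X = \Oo_E$, forcing the type $(\Oo_E, 1)$ and $T(X) \cong \mathrm{diag}(1,1)$ — note $(\alpha)\D_E = \Oo_E$ means $\alpha = \D_E^{-1}$ generates, giving exactly this form. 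Similarly for $E = \Q(\sqrt{-3})$: $\mu_6 \to (\Oo_E/\D_X)^\times$ injective fails only for $\D_X = \Oo_E$, type $(\Oo_E,1)$, and $T(X) \cong \left[\begin{smallmatrix}1&2\\2&1\end{smallmatrix}\right]$ (the hexagonal lattice scaled appropriately — the Gram matrix of $\mathrm{tr}(x\bar y)$ on $\Oo_{\Q(\sqrt{-3})}$).

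The one subtlety, and what I expect to be the main technical obstacle, is the passage from ``some nontrivial $\zeta \in \mu(E)$ acts trivially on $\Oo_E/\D_X$'' to the clean numerical bounds: a root of unity $\zeta$ of order $n$ acts trivially iff $\zeta \equiv 1 \bmod \D_X$, i.e. $\D_X \mid (\zeta - 1)$, and for $n = 3, 6$ the element $\zeta - 1$ generates a small prime ideal (the ramified prime above $3$), while for $n = 4$ it generates the prime above $2$; so the constraint is that $\D_X$ divides a fixed small ideal, and one must verify that in each surviving case the discriminant ideal, computed via $\D_X = (\alpha)I\overline{I}\D_E$ and the inclusion $\D_X \subseteq \D_{E/F}$, genuinely realizes only the listed exceptional type. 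This amounts to checking that for $\Q(i)$ the ideal above $2$ is $(1+i)$ with $\D_E = (1+i)^2$, so $\D_X \mid (1+i)$ is incompatible with $\D_E \mid \D_X$ unless we are in a degenerate situation — here one has to be careful because $\D_X$ need not contain $\D_E$ as stated; rather $\D_X = (\alpha)I\overline I \D_E$ with $(\alpha)I\overline I \subseteq \D_E^{-1}$, so $\D_X$ ranges over ideals dividing\dots no: $\D_X$ is an \emph{integral} ideal containing no constraint forcing $\D_E \mid \D_X$. So the correct statement is $\D_X \subseteq \Oo_E$ arbitrary subject to $\overline{\D_X} = \D_X$ and realizability, and the exceptional locus is cut out by $\D_X \mid (\zeta-1)$ for some nontrivial $\zeta$; I would finish by checking these few ideals against the list of realizable types for each of $\Q(i)$ and $\Q(\sqrt{-3})$, and confirming that the only $\overline{\D_X} = \D_X$ solutions yield the two claimed Gram matrices. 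Finally I would double-check that in the two exceptional cases $-1$ (for $\Q(i)$, $i$; for $\Q(\sqrt{-3})$, $\zeta_6$) indeed lies in the kernel $K_X$ of $d_T\colon \mu(X) \to O(q_X)$, i.e. that these surfaces genuinely fail the big-discriminant condition, which is immediate once $\D_X = \Oo_E$ since then $D_X = 0$ and every element of $\mu(E)$ acts trivially.
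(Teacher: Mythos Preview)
Your overall strategy matches the paper's: reduce via Proposition~\ref{keydiscriminant} to checking injectivity of $\mu(E)\to(\Oo_E/\D_E)^\times$, which settles all $E$ with $\mu(E)=\{\pm1\}$, and then handle $\Q(i)$ and $\Q(\sqrt{-3})$ by hand. The generic case is fine.

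The two special cases, however, contain a genuine error. You conclude that the exceptional surfaces have $\D_X=\Oo_E$ (so $D_X=0$), but this is impossible: Proposition~\ref{keydiscriminant} gives $\D_X\subseteq\D_{E/F}=\D_E\subsetneq\Oo_E$, so $\D_X$ is never the unit ideal. The source of the mistake is the sentence ``no constraint forcing $\D_E\mid\D_X$'': for ideals, $\D_E\mid\D_X$ \emph{means} $\D_X\subseteq\D_E$, which is exactly what Proposition~\ref{keydiscriminant} asserts. Once you keep this constraint, the analysis runs as follows. For $E=\Q(i)$ one has $\D_E=(2)=(1+i)^2$; the kernel of $\mu_4\to(\Oo_E/\D_X)^\times$ is nontrivial iff $\D_X\mid(2)$ (since $i-1$, $-2$, $-i-1$ all generate ideals dividing $(2)$), and together with $(2)\mid\D_X$ this forces $\D_X=(2)=\D_E$, i.e.\ $(\alpha)I\overline I=\Oo_E$, hence type $(\Oo_E,1)$. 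For $E=\Q(\sqrt{-3})$ one has $\D_E=(\sqrt{-3})$; here $\omega-1$ is a unit while $\omega^2-1$ generates $(\sqrt{-3})$, so the kernel is nontrivial iff $\D_X\mid(\sqrt{-3})$, and together with $(\sqrt{-3})\mid\D_X$ this forces $\D_X=(\sqrt{-3})=\D_E$, again type $(\Oo_E,1)$. This is precisely the paper's computation.

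Note also that your verification paragraph (``immediate once $\D_X=\Oo_E$ since then $D_X=0$'') is checking the wrong thing: for the actual exceptional types one has $|D_X|=4$ (resp.\ $3$), and the point is that $-1$ (resp.\ $\omega^2$) lies in the kernel while $\mu(E)\to(\Oo_E/\D_X)^\times$ is genuinely non-injective, not that the target is trivial.
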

\begin{proof}
Let $X$ have complex multiplication by the ring of integers of $\Q(\sqrt{-d})$, with $d$ a square-free integer, and let $(I,\alpha)$ be the type of $X$. 
Suppose that $-d \equiv 2,3 \mod 4$. In this case, $\Oo_E = \Z[\sqrt{-d}]$ and $\D_E = (2  \sqrt{-d})$. Hence, having big discriminant means that the map 
\begin{equation} \label{BD}
\mu(E) \rightarrow \bigg( \frac{\Z[\sqrt{-d}] } {(\alpha) I \overline{I} (2  \sqrt{-d})} \bigg)^{\times}
\end{equation}
 is injective. If $d \neq -1$, then $\mu(E) = \{ \pm 1\}$ and the map   
$$\mu(E) \rightarrow \bigg( \frac{\Z[\sqrt{-d}] } {(2  \sqrt{-d})} \bigg)^{\times}$$
is already injective, so that we conclude thanks to Proposition \ref{keydiscriminant}. If $d=-1$, then $\mu(E) = \mu_4$ and the map 
\eqref{BD} has a kernel if and only if $(\alpha) \mathrm{Nm}_{E/\Q}(I) = \Z$. Since $\Z[i]$ is a UFD, every type $(I,\alpha)$ is equivalent to one of the form $(\Z[i], \alpha)$. Hence, the unique type in this case that has not big discriminant is $(\Z[i],1)$. \\
Suppose now that $-d \equiv 1 \mod 4$, so that $\D_E = (\sqrt{-d})$. If $d \neq 3$, then $\mu(E) = \mu_2$. Since $(2) \nsubseteq (\sqrt{-d})$, we conclude that
$$\mu_2 \rightarrow \bigg( \frac{\Oo_E } {(\sqrt{-d})} \bigg)^{\times}$$
has trivial kernel, hence $X$ has big discriminant. The last case left to consider is when $E= \Q(\sqrt{-3})$. Let $\omega := \frac{1+\sqrt{-d}}{2}$ be a primitive sixth-root of unity, so that $\Z[\omega]$ is the ring of integers of $E$. Since $\Z[\omega]$ is a UFD, we can suppose our type to be of the form $(\Z[\omega], \alpha)$ for some $\alpha \in \Q_{>0}$. The kernel of the map 
$$\mu_6 \rightarrow \bigg( \frac{\Z[\omega]} {(\sqrt{-3})} \bigg)^{\times}$$
is $\mu_3$, since $\omega^2 - 1 = \sqrt{-3} \omega$. Hence, $(\Z[\omega], \alpha)$ does not have big discriminant if and only if $\sqrt{-3} \omega \in (\alpha \sqrt{-3}),$ i.e. if and only if $\alpha =1$. 
\end{proof}
Therefore, there are exactly two (isomorphism classes of) complex K3 surfaces with CM by the ring of integers of an imaginary quadratic extension whose discriminant is not big. Coincidentally, these surfaces were studied in \cite{MR719348}. If the CM field is not quadratic imaginary, we have the following finiteness theorem.

\begin{thm} \label{almostall}
Let $E \subset \C$ be a CM number field, and denote by $K(E)$ the set of isomorphism classes of principal K3 surfaces over $\C$ whose reflex field equals $E$. Then, up to finitely many elements, every $X \in K(E)$ has big discriminant. 
\end{thm}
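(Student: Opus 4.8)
The plan is to reduce the assertion to the classical finiteness of even lattices of bounded rank and bounded discriminant. The point at which the hypotheses on $E$ really enter is the observation that the failure of the ``big discriminant'' condition forces the discriminant ideal $\D_X$ to lie in an explicit finite set of ideals depending only on $E$; after that, one only has to see that finitely many $X\in K(E)$ can have $\D_X$ in a prescribed finite set.

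\textbf{Reducing to a divisibility on $\D_X$.} Let $w$ be the order of the finite cyclic group $\mu(E)$ of roots of unity of $E$, and fix a generator $\zeta\in\mu(E)$. By Proposition \ref{discriminantP}(2) we may identify $D_X$ with $\Oo_E/\D_X$, so $X$ fails to have big discriminant precisely when the map $\mu(E)\to(\Oo_E/\D_X)^\times$ has nontrivial kernel. A nontrivial subgroup of the cyclic group $\mu(E)$ contains $\zeta^{w/p}$ for some prime $p\mid w$, so $X$ fails to have big discriminant if and only if
\[
\D_X\mid(\zeta^{w/p}-1)\Oo_E\qquad\text{for some prime }p\mid w .
\]
Since $w\geq 2$, each $\zeta^{w/p}-1$ is a nonzero element of $\Oo_E$; as there are finitely many primes $p\mid w$ and each ideal $(\zeta^{w/p}-1)\Oo_E$ has finitely many divisors, the ideal $\D_X$ must then lie in an explicit finite set $\mathcal{S}(E)$ of ideals of $\Oo_E$. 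In particular $N(\D_X)$, and hence $|\disc T(X)|=N(\D_X)$, is bounded by a constant depending only on $E$.

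\textbf{Finitely many surfaces with $\D_X\in\mathcal{S}(E)$.} Fix $\D\in\mathcal{S}(E)$ and let $(I,\alpha,\sigma)$ be the type of an $X\in K(E)$ with $\D_X=\D$, where $\sigma$ is the fixed embedding $E\hookrightarrow\C$. Since replacing $I$ by $\lambda I$ for $\lambda\in E^\times$ leaves the type unchanged, we may take $I$ to be one of a finite set of representatives of the (finite) class group $\Cl(E)$. The relation $\D=(\alpha)I\overline{I}\D_E$ then pins $(\alpha)\Oo_E$ to a fixed fractional ideal, so $\alpha$ is determined up to a unit of $\Oo_E^\times\cap F^\times=\Oo_F^\times$; since $\alpha=\overline{\alpha}$ and the quadratic form $T(X)$ has signature $(2,[E:\Q]-2)$ --- which forces the sign of $\alpha$ at every real place of $F$, the positive one being the place below $\sigma$ --- $\alpha$ is determined up to a totally positive unit of $\Oo_F$. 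Finally, up to the equivalence defining the type, $\alpha$ is only determined modulo $\text{Nm}_{E/F}(\Oo_E^\times)$, which has finite index in $\Oo_F^{\times,+}$ because $E$ is CM and so $\Oo_E^\times$ and $\Oo_F^\times$ have equal rank. Hence only finitely many types occur; and by the surjectivity of the period map and the Torelli theorem, combined with Nikulin's finiteness of primitive embeddings of a lattice into $\Lambda_{K3}$ \cite{MR525944}, each type is realised by only finitely many $X\in K(E)$. Together with the previous paragraph this gives the theorem.

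\textbf{Where the difficulty lies.} The whole argument hinges on the first step: recognising that ``not having big discriminant'' is equivalent to a divisibility of $\D_X$ by one of finitely many fixed elements of $\Oo_E$. Everything afterwards is a routine finiteness argument. The one subtlety to watch is that bounding $\D_X$ controls only the discriminant form of $T(X)$, not the scaling of the lattice; this is exactly why the archimedean signature computation (to remove the sign ambiguity in $\alpha$) and the CM hypothesis on $E$ (to make $\Oo_F^{\times,+}/\text{Nm}_{E/F}(\Oo_E^\times)$ finite) are needed. One could also organise the last step directly around the classical finiteness of even lattices of given rank and bounded discriminant, using that for $X\in K(E)$ the lattice $T(X)$ --- of fixed rank $[E:\Q]$ --- carries an essentially unique Hodge structure with reflex field $E$, since $T(X)_{\Q}\cong E$ as an $E$-module.
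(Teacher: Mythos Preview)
Your proof is correct and follows essentially the same route as the paper: bound the possible discriminant ideals $\D_X$ to a finite set, normalise the ideal $I$ in the type by a class group representative, observe that $(\alpha)$ is then pinned down so that $\alpha$ varies only by a totally positive unit of $F$, and quotient by $\text{Nm}_{E/F}(\Oo_E^\times)$ to get finitely many types. The only cosmetic differences are that you spell out explicitly why the set of ``bad'' ideals is finite (via $\D_X\mid(\zeta^{w/p}-1)$), whereas the paper simply asserts it, and that you invoke Nikulin's finiteness of primitive embeddings where the paper cites the finiteness of Fourier--Mukai partners --- these are interchangeable here.
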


\begin{proof}
It is sufficient to prove that there are finitely many isomorphism classes of types without a big discriminant. Indeed, the type determines the transcendental lattice of a K3 surface, which in turn determines finitely many K3 surfaces (due to the finiteness of the Fourier-Mukai partners, see \cite{MR3586372} p. 373, Proposition 3.10). Let $\{ I_1, \cdots, I_n \}$ be the finite set of ideals for which the map $\mu(E) \rightarrow (\Oo_E / I_n)^\times$ is not injective and let $\{ J_1, \cdots, J_m \}$ be representatives of the classes of $\Cl(E)$. Every type $(J, \alpha')$ is equivalent to one of the form $(J_i , \alpha)$ for some $i \in \{1, \cdots, m \}$. Therefore, if $(J_i , \alpha)$ has not big discriminant, we have that $$(\alpha) J_i \overline{J_i} \D_E = I_j,$$
for some $j \in \{1, \cdots, n \}$. Fix now $i$ and $j$. We want to prove that there are only finitely many isomorphism classes of types of the form $(J_i, \alpha)$ such that the equality 
$$(\alpha) J_i \overline{J_i} \D_E = I_j$$
holds. To do this, suppose that both $(J_i, \alpha_1)$ and $(J_i, \alpha_2)$ have discriminant equals to $I_j$. In particular, we have that $(\alpha_1) = (\alpha_2)$, i.e., there exists a unit $u \in \Oo_E^\times$ such that $\alpha_1 = u \alpha_2$. Moreover, this unit is totally positive, since the signature of $T(X)$ does not depend on $X$. If we denote by $U$ the group of totally positive units, we see that the isomorphism type of $(J_i, u\alpha)$ for $u \in U$ depends only on the image of $u$ in the quotient $U / \mathrm{Nm}_{E/F}(\Oo_E^{\times})$, where $F$ denotes the maximal totally real subfield of $E$. Since the group $U / \mathrm{Nm}_{E/F}(\Oo_E^{\times})$ is finite, we conclude the proof. 
\end{proof}

\section{Some properties of the canonical models}
Let $X/\C$ be a complex K3 surface with CM by $\Oo_E$, and let $I \subset \D_X$ be an ideal as in Section \ref{descending section}. Consider the model $X_I$ constructed in Theorem \ref{DescendingI} over $K = F_I(E).$
\subsubsection{The $\ell$-adic Galois representation}
From the Remark \ref{Galois rep canon model} the Galois representation of $X_I$ is the map $\rho$ constructed in the proof of \ref{DescendingI}. In the next proposition we determine the Frobenius element of each prime of $K$ that does not divide $\ell$ and $I$. Let $\mathfrak{p} \subset \Oo_K$ be such prime, and write $\textrm{Nm}_{K/E}(\mathfrak{p}) = \mathfrak{q}^f$ with $\mathfrak{q} \subset \Oo_E$ a prime ideal and $f \geq 1$ the inertia degree of $\mathfrak{p}$.
We denote by $T_\ell(\overline{X}_I) \subset \mathrm{H}^2_\et(\overline{X}_I, \Z_\ell)(1)$ the $\ell-$adic transcendental lattice of $X_I$, and we have a natural decomposition $$\widehat{T}(\overline{X}_I) \cong \prod_{\ell \, \text{prime}}  T_\ell(\overline{X}_I).$$ We denote by $\rho_\ell \colon G_K \rightarrow \Aut( T_\ell(\overline{X}_I))$ the corresponding $\ell$-adic Galois representation.

\begin{prop} \label{Proposition Galois representation}
Let $F \subset E$ be the maximally totally real subfield, and assume that $\mathfrak{q}$ is coprime to $\ell$ and to $I.$ Then 

\begin{enumerate}
\item The Galois representation  $\mathrm{H}^2_\et(\overline{X}_I, \Z_\ell(1))$ is unramified at $\mathfrak{p};$
    \item If $\mathfrak{q}$ lies over an inert prime of $F$, then the Frobenius of $\mathfrak{q}$ acts as the identity on $\mathrm{H}^2_\et(\overline{X}_I, \Z_\ell(1))$;
    \item If $\mathfrak{q}$ lies over a prime of $F$ that splits in $E$, then there exists a unique $u \in E$ such that $u \overline{u} = 1$, $(u) = \big( \frac{\mathfrak{p}}{\overline{\mathfrak{p}}} \big)^f$ and $ u \equiv 1 \mod I.$ Then the Frobenius at $\mathfrak{p}$ acts on $T(\overline{X}_I)_\ell$ as multiplication by $u$ (and as the identity on $\NS(\overline{X}_I)).$
\end{enumerate}

\end{prop}

\begin{proof}
The Galois representation $\rho \colon G_K \rightarrow U(\A_f) \subset \Aut(\widehat{T}(\overline{X}_I))$ of $X_I$ factorizes through $G_K^{\ab}$, and the composition 

$$ \A_{K,f}^\times \xrightarrow{\art_K} G_K^{ab} \xrightarrow{\rho}  U(\A_f)$$

is the same map appearing in the commutative square \ref{square proof descending}. We call it $\rho$ also this time. We quickly recall how it is constructed. Let $\tau \in G_{K}^{ab}$ and let $t \in A_{K,f}^\times$ be such that $\art_{K}(t) = \tau$. Write $s = \mathrm{Nm}_{K/E}(t)$. Then $s \in S_I$, and there exists a unique $u \in E^\times$ such that $u \overline{u} =1,$ $u \frac{s}{\overline{s}} \in \widehat{\Oo}_E^{\times}$ and $u \frac{s}{\overline{s}} \equiv 1 \mod I$. The Galois representation is then given by $\rho(\tau) = u \frac{s}{\overline{s}}$. 
To study $\rho$ locally at $\mathfrak{p}$ we look at the restriction of $\rho$ at the decomposition group of $\mathfrak{p}$: $$\Gal(\overline{K}_{\mathfrak{p}} /K_{\mathfrak{p}}) \cong D_{\mathfrak{p}} \subset G_{k}^{ab},$$  which is well defined since we are only working with Abelian extensions. We have a commutative diagram

\begin{center}
\begin{tikzcd}

K_{\mathfrak{p}} \arrow[d, hook, "\iota"] \arrow[r,"\art_{\mathfrak{p}}"] & \Gal(\overline{K}_{\mathfrak{p}}/K_{\mathfrak{p}}) \arrow[d, hook] \\
\A_{K,f}^{\times}  \arrow[r, "\art_{K}"] & G_{K}^{ab}
\end{tikzcd} 
\end{center}
where $\art_{\mathfrak{p}}$ is the local Artin map. Note that if $t \in K_{\mathfrak{p}}$ then $\iota(t)$ is the id\`{e}le with value $x$ at the $\mathfrak{p}$-component and $1$ elsewhere. Since $\mathfrak{p}$ and $I$ are coprime, it follows that we automatically have $s \equiv 1 \mod I.$  Moreover, if $\ell$ and $\mathfrak{q}$ are coprime, the map $\widehat{T}(\overline{X}_I) \rightarrow \widehat{T}(\overline{X}_I)$ given by multiplication by $\iota(x)$ restricts to the identity on $T(\overline{X}_I)_\ell$.
 
\begin{enumerate}
    \item To prove that $\rho$ is unramified at $\mathfrak{p}$ we need to show that $\rho(I_\mathfrak{p}) = \{1\}$, where $I_\mathfrak{p} \subset G_K^{ab}$ is the inertia of $\mathfrak{p}$. Via the local Artin map, $I_\mathfrak{p}$ corresponds to $\Oo_{K,\mathfrak{p}}^{\times} \subset \A_{K,f}^{\times}.$ In particular, for $\tau \in I_{\mathfrak{p}} $ we can choose $t \in \im(\Oo_{K,\mathfrak{p}}^{\times} \rightarrow \A_{K,f}^{\times})$. It follows that $s \in \Oo_{K,\mathfrak{p}}^{\times},$ so that $u$ must be integral, and therefore $u \in \mu(E).$ But since $s \equiv 1 \mod I$ always, we see that the last condition translates to $u \equiv 1 \mod I,$ i.e., $u = 1.$ Hence $\rho(\tau)$ is given by multiplication by $\frac{s}{\overline{s}}.$ Finally, since $\ell$ and $\mathfrak{p}$ are coprime, $\frac{s}{\overline{s}}$ acts trivially on the $\ell$-adic transcendental lattice $T_\ell(\overline{X}_I)$. 
    
    \item Let $t \in \im(K_\mathfrak{p}^{\times} \rightarrow \A_{K,f}^{\times})$ be such that $t^{-1}$ is a local uniformizer. Then $\rho_\ell(\art_{K}(t))$ is the Frobenius at $\mathfrak{p}$. Since by assumption $\mathfrak{q}$ is inert over $F$, one concludes that $\frac{s}{\overline{s}} \in \widehat{\Oo}^\times_E$ in this case as well. Therefore one shows that $u = 1 $ too and one concludes as above. 
    
    \item This follows from the discussions at the beginning of the proof.

\end{enumerate}
\end{proof}

\subsubsection{The Picard group} \label{picard section} Let $X_I/K$ and $I$ be as in Theorem \ref{DescendingI}. In practice, it could be useful to know if every divisor of $X_I$ is defined over $K$. Note that this does not follow from the fact that the absolute Galois group of $K$ acts trivially on $\NS(\overline{X}_I) = \Pic(\overline{X}_I)$; in general one has a spectral sequence $$E^{p,q}_2:= \mathrm{H}^p(K, \mathrm{H}_{\et}^q(\overline{X} , \mathbb{G}_m) \Rightarrow \mathrm{H}_{\et}^{p+q}(X, \mathbb{G}_m)$$
which induces an exact sequence 
\begin{equation} \label{Picard sequence}
0 \rightarrow \Pic(X) \rightarrow \Pic(\overline{X})^{G_K} \rightarrow \Br(K) \rightarrow \Br(X).
\end{equation}
The quotient $\Pic(\overline{X})^{G_K} / \Pic(X) = \ker( \Br(K) \rightarrow \Br(X))$ is called the \textit{Amitsur group} of $X$ and it is denoted $\text{Am}(X)$. It is a finite Abelian group. 
\begin{defi}
Let $X$ an algebraic variety over a field $K$. The index of $X/K$ is $$\delta(X/K):= \gcd \{ [L:K] \colon [L:K]  < \infty \,\,\text{and} \,\, X(L) \neq \emptyset \}.$$
\end{defi}
\begin{prop} \label{index-Amitsur prop}
Let $X/K$ be a smooth projective and geometrically irreducible variety. Then $$\delta(X/K) \cdot \text{Am}(X) = \{0\}.$$ 
\end{prop}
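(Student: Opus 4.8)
The plan is to show that $\delta(X/K)$ annihilates $\mathrm{Am}(X)$ by exploiting the functoriality of the exact sequence (\ref{Picard sequence}) under base change, together with a restriction–corestriction argument. First I would recall that $\mathrm{Am}(X) = \ker(\Br(K) \to \Br(X))$, so an element $\beta \in \mathrm{Am}(X)$ is a Brauer class of $K$ that dies in $\Br(X)$. For any finite field extension $L/K$ with $X(L) \neq \emptyset$, the base-changed variety $X_L$ has a rational point, so the structure map $\Spec L \to X_L$ splits the natural map $\Br(L) \to \Br(X_L)$; in particular $\Br(L) \to \Br(X_L)$ is injective, hence $\mathrm{Am}(X_L) = 0$.

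Next I would use the commutative square relating the base-change map $\Br(K) \to \Br(L)$ and $\Br(X) \to \Br(X_L)$: if $\beta \in \mathrm{Am}(X)$, then its image $\beta_L \in \Br(L)$ lies in $\mathrm{Am}(X_L) = 0$, so $\beta_L = 0$. Thus $\beta$ is killed by restriction to any such $L$. Now apply the standard restriction–corestriction identity $\mathrm{cor}_{L/K} \circ \mathrm{res}_{L/K} = [L:K]$ on $\Br(K)$: since $\mathrm{res}_{L/K}(\beta) = 0$, we get $[L:K]\cdot \beta = 0$ for every finite $L/K$ with $X(L) \neq \emptyset$. Taking the gcd over all such $L$ yields $\delta(X/K)\cdot \beta = 0$, since the gcd of a set of integers each annihilating the finite group element $\beta$ is an integer combination of them and hence also annihilates $\beta$.

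The main obstacle — really the only subtle point — is justifying that $\delta(X/K)\cdot\beta = 0$ from the fact that $[L_i:K]\cdot\beta = 0$ for a (possibly infinite) family of degrees $[L_i:K]$ whose gcd is $\delta(X/K)$. This is fine because $\mathrm{Am}(X)$ is a \emph{finite} abelian group (stated in the excerpt right after (\ref{Picard sequence})): the annihilator of $\beta$ in $\ZZ$ is an ideal $(n_\beta)$ with $n_\beta \mid [L_i:K]$ for all $i$, hence $n_\beta \mid \gcd_i [L_i:K] = \delta(X/K)$, so $\delta(X/K) \in (n_\beta)$ and $\delta(X/K)\cdot\beta = 0$. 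Since $\beta \in \mathrm{Am}(X)$ was arbitrary, $\delta(X/K)\cdot \mathrm{Am}(X) = \{0\}$.

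A couple of routine checks I would spell out but not belabor: that the Hochschild–Serre exact sequence (\ref{Picard sequence}) is natural in $K$ (so the relevant Brauer-group squares commute), and that for a variety with a rational point the evaluation map provides a section to $\Br(K) \to \Br(X)$ — this is exactly where smoothness, projectivity, and geometric irreducibility of $X/K$ enter, guaranteeing $H^0(\overline X, \mathbb{G}_m) = \overline{K}^\times$ so that the low-degree terms of the spectral sequence take the stated form over every finite extension.
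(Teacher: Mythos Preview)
Your proof is correct and follows exactly the approach the paper indicates: functoriality of the sequence (\ref{Picard sequence}) under finite base change together with the restriction--corestriction identity on $\Br(K)$. The paper's proof is just the one-line sketch ``functoriality of (\ref{Picard sequence}) and a restriction--corestriction argument,'' and you have spelled out precisely those details.
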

\begin{proof}
This follows from the functoriality of \eqref{Picard sequence} and by a restriction-corestriction argument. 
\end{proof}
If $K$ is a number field for every place $v$ of $K$ consider the local index $\delta(X_{v} / K_{v})$ of the base change of $X$ to the completion $K_v$ of $K$ at $v$. 
\begin{cor}
If every local index of $X$ is one, the map $\Pic(X) \rightarrow \Pic(\overline{X})^{G_K}$ is an isomorphism. 
\end{cor}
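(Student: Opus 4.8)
The plan is to reduce the statement to the vanishing of the Amitsur group. By the exact sequence (\ref{Picard sequence}), the injection $\delta\colon \Pic(X)\hookrightarrow \Pic(\overline{X})^{G_K}$ has cokernel canonically identified with the image of $\Pic(\overline{X})^{G_K}\to\Br(K)$, which is exactly $\text{Am}(X)$. So the map $\Pic(X)\to\Pic(\overline{X})^{G_K}$ is an isomorphism if and only if $\text{Am}(X)=\{0\}$, and the whole problem becomes: show $\text{Am}(X)=0$ when all local indices of $X$ are $1$.

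For this I would argue locally–globally. Since $K$ is a number field, the Albert--Brauer--Hasse--Noether exact sequence provides an injection $\Br(K)\hookrightarrow\bigoplus_v\Br(K_v)$, the sum over all places of $K$; as $\text{Am}(X)\subset\Br(K)$, it suffices to show that $\text{Am}(X)$ maps to zero in each $\Br(K_v)$. Fixing a place $v$ and an embedding $\overline{K}\hookrightarrow\overline{K_v}$ (so that $G_{K_v}\subset G_K$), the geometric Picard group is unchanged under this algebraically closed base extension, giving $\Pic(\overline{X_v})\cong\Pic(\overline{X})$ with $G_{K_v}$ acting by restriction. By naturality of the Hochschild--Serre spectral sequence for $\mathbb{G}_m$ along $\Sp K_v\to\Sp K$, the exact sequence (\ref{Picard sequence}) for $X/K$ maps compatibly to the one for $X_v:=X\times_K K_v$ over $K_v$, and in particular the image of $\text{Am}(X)$ in $\Br(K_v)$ lands inside $\text{Am}(X_v)$. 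Now I apply Proposition \ref{index-Amitsur prop} to the smooth projective geometrically irreducible variety $X_v/K_v$: it gives $\delta(X_v/K_v)\cdot\text{Am}(X_v)=\{0\}$. Since by hypothesis $\delta(X_v/K_v)=1$, this forces $\text{Am}(X_v)=0$, hence the image of $\text{Am}(X)$ in $\Br(K_v)$ vanishes. As this holds for every $v$, the injectivity of $\Br(K)\hookrightarrow\bigoplus_v\Br(K_v)$ yields $\text{Am}(X)=0$, and we are done.

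The argument is essentially formal, so I do not expect a serious obstacle; the only points requiring a little care are the bookkeeping in the second step — namely that the base-change map on geometric Picard groups really is an isomorphism (standard, since $\Pic$ of a proper variety commutes with filtered colimits of algebraically closed fields) and that the localization square involving $\Br(K)\to\Br(K_v)$ genuinely commutes (just functoriality of the spectral sequence whose first terms were recorded before (\ref{Picard sequence})). One should also remember to include the archimedean places when invoking the global Brauer sequence, which is where the hypothesis "every local index is one" does real work at the real places.
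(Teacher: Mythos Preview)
Your proof is correct and follows essentially the same approach as the paper: both use Proposition \ref{index-Amitsur prop} applied at each place together with the Albert--Brauer--Hasse--Noether injection $\Br(K)\hookrightarrow\bigoplus_v\Br(K_v)$. The paper's proof is a two-line sketch (``This follows from Proposition \ref{index-Amitsur prop} and the short exact sequence\dots''), and your write-up simply unpacks the functoriality step needed to land $\text{Am}(X)$ inside each $\text{Am}(X_v)$.
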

\begin{proof}
This follows from Proposition \eqref{index-Amitsur prop} and the short exact sequence $$0 \rightarrow \Br(K) \rightarrow \bigoplus_v \Br(K_v) \xrightarrow{\sum_v \inv_v} \Q / \Z \rightarrow 0.$$
\end{proof}
In particular, if $X$ has a point everywhere locally, then the map $\Pic(X) \rightarrow \Pic(\overline{X})^{G_K}$ is an isomorphism. In the following, we collect some miscellaneous statements around these questions. 

\begin{prop} \label{proposition miscellanea}

Keeping the notation of above, we have 
\begin{enumerate}
\item Assume that $X_I$ contains a $(-2)$-curve. Then, there is a quadratic extension of $K$ over which all the line bundles of $X$ are defined.
  
    \item Suppose that $X_I$ has good reduction at a prime $\mathfrak{p}$ of $K.$ If $[E \colon \Q] \leq 12$ or $\mathrm{Nm}(\mathfrak{p}) \geq 18$ then $X_I$ has a local point at $\mathfrak{p}.$ 
   \item If $X_I$ is a Kummer surface, there exists a quadratic extension $K'/K$ and an Abelian surface $A/K'$ with $K'$-rational $2$-torsion such that $X_{I,K'} \cong \mathrm{Km}(A).$ It follows that $X_I(K') \neq \emptyset$ and $\Pic(\overline{X}_{I}) = \Pic(X_{I,K'})$.
\end{enumerate}
\end{prop}
\begin{proof}
\begin{enumerate}
   
\item Let $C \subset X$ be a $(-2)$-curve. By the fact that $C$ is unique in its linear system and that $G_K$ acts trivially on $\Pic(\overline{X}_I)$ we deduce that there exists a curve $C' \subset X_I$ whose base-change to $\C$ is $C$. It follows that $C'$ is a Severi-Brauer variety of dimension one over $K$, and hence that there is a quadratic extension $K'/K$ such that $C'_{K'} \cong \Pp^1_{K'}$.
    \item We use the notation of Proposition \ref{Proposition Galois representation}. Let $u \in E$ such that $u \overline{u} =1$ and the Frobenius at $\mathfrak{p}$ acts as multiplication by $u$ on $T_\ell(\overline{X}_I).$ By the Weil-conjecture it follows that the reduction of $X_I$ at $\mathfrak{p}$ has $q^2 + q \cdot \tr_{E/ \Q}(u) + q \cdot \rho(X) +1$ rational points. Since $u \overline{u} =1$, the bounds $-[E \colon \Q] \leq \tr_{E/ \Q}(u) \leq [E \colon \Q]$ hold. This, together with the fact that $\rho(X) = 22 - [E \colon \Q]$ and $[E \colon \Q] \leq 20$, implies point (3).
    \item In fact, as in point (1), all the sixteen exceptional divisors $E_1, \cdots, E_{16}$ are defined over $K$. Let $K'/K$ be a quadratic extension over which $E_{1,K'} \cong \Pp^1_{K'}$. Due to Corollary \ref{corollary} every isomorphism of $\overline{X}_{I}$ is defined over $K$, from which it follows that $\{E_1, \cdots, E_{16} \}$ are permuted by $\Aut(X_I)$. Hence, $E_{i, K'} \cong \Pp^1_{K'}$ for every $i = 1, \cdots, 16$.
    In $\Pic(X_{I, K'})$ there exists a reduced divisor $D \subset X_{I, K'}$ such that $2D = E_1 + \cdots + E_{16}$; one checks that  also $D$ must be defined over $K'$, because it is unique in its linear system. Let $\phi \colon Y \rightarrow X_{I, K'}$ be a $2$-covering associated to $D$. It follows that the ramification locus of $\phi$ can be written as $R_\phi := \sum_i C_i$, where each $C_i$ is a $(-1)$-curve isomorphic to $\Pp^1_{K'}$. From the arithmetic version of Castelnuovo's contractibility criterion (see \cite{MR1917232}, Theorem 3.7 p. 416) we can contract these curves to obtain a smooth surface $A$. Let $\pi \colon Y \rightarrow A$ denote the contraction map. We see that $A(K') \neq \emptyset$, since $\pi(C_i)$ is a $K'-$point, and the very same procedure carried out over $\C$ tells us that $A_{\C}$ is an Abelian surface such that $X \cong \text{Km}(A_\C)$. Therefore, $A$ is an Abelian surface too and $X_{I,K'} \cong \text{Km}(A)$. The fact that the full $2-$torsion is defined over $K'$ follows from the fact that every $2$-torsion point of $A$ can be written as $\phi(C_i)$ for some $i.$
\end{enumerate}

\end{proof}

\section{Applications}
In this last section, we derive the asymptotic \ref{theorem asymptotics introduction} and we study K3 surfaces with small fields of definition.

\subsection{Proof of Theorem \ref{theorem asymptotics introduction}} \label{subsection Proof of asymptotics}
In order to prove the estimates in Theorem \ref{theorem asymptotics introduction} we start with the following lemmas.

\begin{lemma} \label{lemma mult n}
Let $X/K$ be a K3 surface over a number field $K$ and let $n \in \Z$ be an integer. Denote by $B_n \subset \widehat{T}(\overline{X}) \otimes \Q / \Z $ the finite group given by 

$$B_{n} \coloneqq \big\{ \alpha \in \widehat{T}(\overline{X}) \otimes \Q / \Z \colon n \alpha \in  \big( \widehat{T}(\overline{X}) \otimes \Q / \Z \big)^{G_K} \big\}$$
and consider the natural Galois representation $\rho_n \colon G_K \rightarrow \Aut(B_n)$. Then, there exists a constant $C_n$ depending only on $n$ such that the $$[G_K \colon \ker(\rho_{n})] \leq C_n.$$
\end{lemma}

\begin{proof}
For any element $x \in \widehat{T}(\overline{X}) \otimes \Q / \Z$ we denote by $K(x)/K$ the field extension determined by $\{ \sigma \in G_K \colon \sigma^*(x) = x \}.$
Let $T(\overline{X})[n] :=  \widehat{T}(\overline{X}) \otimes \Z / n \subset  \widehat{T}(\overline{X}) \otimes \Q / \Z.$ Then $T(\overline{X})[n] \cong \big( \Z/ n \Z \big)^t$, and its Galois representation can be seen as a continuous morphism $G_K \rightarrow \Gl(22-\rho, \Z / n \Z).$ In particular, there exists a universal constant $C'_n := |\Gl(21, \Z/n \Z)| $ and an extension $K_1/K$ of degree bounded by $C'_n$ such that 
\begin{equation} \label{equation lemma}
    T(\overline{X})[n] \subset  \big( T(\overline{X}) \otimes \Q / \Z \big)^{G_{K_1}}.
\end{equation} 
Let $b \in \big( \widehat{T}(\overline{X}) \otimes \Q / \Z \big)^{G_K}$ and choose $b_n$ such that $nb_n = b$. If $K_2=K_1(b_n)$ it follows from \eqref{equation lemma} that $K_2/K_1$ is Galois, and that there is a natural inclusion $\Gal(K_2/K_1) \subset T(\overline{X})[n].$ Note that the Abelian group $\big( \widehat{T}(\overline{X}) \otimes \Q / \Z \big)^{G_K}$ has at most $21$ generators. Thus, putting $C_n \coloneqq C'_n \cdot |(\Z / n \Z )^{21}|  =  C'_n \cdot n^{21}$, we conclude the proof.
\end{proof}
\begin{lemma} \label{lemma for bound}
There exists a universal constant $A$ such that, if $X / \C$ is any K3 surface with CM by $\Oo_E$ for any $E$, and $K \subset \C$ is a field of definition for $X$ of minimal degree $F_X \coloneqq [K \colon \Q],$ then 
$$A \cdot [F_{3 \cdot \mathcal{D}_X}(E) \colon E] \leq F_X \leq 20 \cdot [F_{3 \cdot \mathcal{D}_X}(E) \colon E].$$
\end{lemma}

\begin{proof}
Let $C$ be the constant appearing in Remark \ref{remark main theorem introduction}, and consider an extension $K_1/K$ of degree bounded by $20 \cdot C$ such that $G_{K_1}$ acts trivially on $\NS(\overline{X})$ and $E \subset K_1.$ Note that in particular
$$ T(\overline{X})[\D_X] \subset \big( \widehat{T}(\overline{X}) \otimes \Q / \Z \big)^{G_{K_1}},$$
which implies that $F_{\D_X}(E) \subset K_1$. Let $\rho_3 \colon G_{K_1} \rightarrow \Aut(B_3)$ be the Galois representation from Lemma \ref{lemma mult n} with $n=3$, and let $K_2/K_1$ be the extension associated to $\ker(\rho_3)$. It follows by construction that 
$$ T(\overline{X})[3 \cdot \D_X] \subset \big( \widehat{T}(\overline{X}) \otimes \Q / \Z \big)^{{G_K}_2},$$
and moreover by the Remark \ref{remark (3)}, the ideal $I = 3 \cdot \D_X$ has the property that $\mu(E) \rightarrow (\Oo_E / I)^\times$ is injective. By the property of Theorem \ref{main theorem introduction}, we conclude that $F_{3 \cdot \D_X} \subset K_2,$ and we can pick $A= \frac{1}{C \cdot C_3 \cdot 20}$. The second inequality follows since $F_{3 \cdot \D_X}$ is a field of definition for $X$.
\end{proof}

Thus, it remains to study the degrees $[F_{3 \cdot \mathcal{D}_X}(E) \colon E].$ We introduce the following notation (some of which was already introduced in \ref{K3 class filds subsection}).

\begin{itemize}
\item $F \subset E$ is the maximal totally real subfield of $E$;
    \item For any ideal $I \subset \Oo_E$, we denote by $E^{I,1} = \{ e \in E^\times \colon \ord_\mathfrak{p} (e-1) \geq \ord_\mathfrak{p} I \,\, \forall \,\, \mathfrak{p} | I \}$.
    \item $\Oo_E^{I}:= \Oo_E^\times \cap E^{I,1}$;
    \item We put $d_X := \D_X^{G} = \D_X \cap F$;
    \item Let $e(d_X)$ be the product of the ramification indices in $E$ of all the places of $F$ that are coprime to the ideal $d_X \subset \Oo_F$ (in particular, all the infinite places are taken into account);
    \item Finally, let $G \subset \Aut(E)$ be the subgroup generated by the complex conjugation. For any $G-$module $N$ we write $\mathrm{H}^i(M) := \mathrm{H}^i(G,M)$.
\end{itemize}
In Chapter 10 of \cite{VALLONI2021107772} we proved that

\begin{equation} \label{cardinality class field}
[F_{\D_X}(E) \colon E] =  \frac{2 \cdot h_E \cdot \phi_E(\D_X) \cdot [\Oo^{\times}_F \colon \mathrm{Nm}_{E/F}(\Oo^{\D_X}_E)]}{h_F \cdot \phi_F(d_X) \cdot [\Oo^{\times}_E \colon \Oo^{\D_X}_E] \cdot e(d_X) \cdot | \mathrm{H}^1(E^{\D_X,1}) |},
\end{equation}
\begin{prop}
There are constants $B_1, B_2 > 0$ such that 
$$ 
B_1 \leq \frac{2 \cdot [\Oo^{\times}_F \colon \mathrm{Nm}_{E/F}(\Oo^{3 \cdot \D_X}_E)]}{[\Oo^{\times}_E \colon \Oo^{3 \cdot\D_X}_E] \cdot e(E/F, 3 \cdot d_X) \cdot | \mathrm{H}^1(E^{3 \cdot \D_X,1}) |} \leq B_2 $$
for any K3 surface $X/ \C$ with CM by any $\Oo_E.$
\end{prop}

\begin{proof}
In the following short exact sequence, we note that the first vertical arrow is surjective
 \begin{center}
\begin{tikzcd} 
 1 \arrow[r] 
& \Oo_E^{3 \cdot \D_X} \arrow[r] \arrow[d] 
& \Oo_E^\times \arrow[r] \arrow[d, "\mathrm{Nm}_{E/F}"] 
& \Oo_E^\times / \Oo_E^{3 \cdot \D_X} \arrow[r] \arrow[d]
& 1 \\
1 \arrow[r] 
& \mathrm{Nm}_{E/F}(\Oo^{3 \cdot \D_X}_E) \arrow[r] 
& \Oo_F^{\times} \arrow[r] 
& \Oo_F^{\times} / \mathrm{Nm}_{E/F}(\Oo^{3 \cdot \D_X}_E)  \arrow[r]
& 1.
\end{tikzcd}
\end{center}
Since $\mu(E) \rightarrow (\Oo_E / 3\D_X )^\times $ is injective we obtain the following exact sequence
$$ 1 \rightarrow \mu(E) \rightarrow \Oo_E^\times / \Oo_E^{\D_X} \rightarrow  \Oo_F^{\times} / \mathrm{Nm}_{E/F}(\Oo^{3 \cdot\D_X}_E)  \rightarrow \Oo_F^{\times} / \mathrm{Nm}_{E/F}(\Oo^{\times}_E) \rightarrow 1,$$
that implies 
\begin{equation} 
\frac{ [\Oo^{\times}_F \colon \mathrm{Nm}_{E/F}(\Oo^{3 \cdot \D_X}_E)]}{[\Oo^{\times}_E \colon \Oo^{3 \cdot \D_X}_E]} = \frac{[ \Oo_F^{\times} \colon \mathrm{Nm}_{E/F}(\Oo^{\times}_E)  ]}{|\mu(E)|}.
\end{equation} 
Moreover, $$1 \leq [ \Oo_F^{\times} \colon \mathrm{Nm}_{E/F}(\Oo^{\times}_E)  ]   = 2 \cdot  [\Oo_F^\times \colon \Oo_F^{\times 2}] = 2^{[F \colon \Q]}$$
and $$1 \leq |\mu(E)|  \leq M,$$
where $M$ is the biggest integer such that $\phi(M) \leq 20.$
Concerning the term $e(3 \cdot d_X),$ by Proposition \ref{keydiscriminant} we have that $\D_X \subset (2)^{-1} \D_{E/F}$. Since the primes the divide $\D_{E/F} \cap F$ are exactly the ones that ramify in $E$, we see that in the product $e(3 \cdot d_X)$, only the places at infinity appear, and at most the places over $2$, so that $e(3 \cdot d_X) \leq 2^{2 [F \colon \Q]} \leq 2^{20} .$ 
Finally, the term $\mathrm{H}^1(E^{3 \cdot \D_X,1})$ is described in \cite[Proposition 11.5]{VALLONI2021107772}, and it also can be universally bounded for any CM field $E$ with $[E \colon \Q] \leq 20.$ 
\end{proof}
This, together with Lemma \ref{lemma for bound}, proves the existence of constants $A_1, A_2$ which are independent on $X$ and on $E$ and such that  

$$A_1 \cdot \frac{\phi_E(\D_X)}{\phi_F(d_X)} \cdot \frac{h_E}{h_F} \leq F_X \leq A_2 \cdot \frac{\phi_E(\D_X)}{\phi_F(d_X)} \cdot \frac{h_E}{h_F}$$
for any K3 surface $X$ with CM by any $\Oo_E.$ Hence, Theorem \ref{theorem asymptotics introduction} is proved.

Note that the quantity $\frac{h_E}{h_F}$ only depends on the rational Hodge structure $T(X)_\Q$, while $\frac{\phi_E(\D_X)}{\phi_F(d_X)}$ depends on the integral Hodge structure $T(X)$. If $E$ is fixed then $\frac{\phi_E(\D_X)}{\phi_F(d_X)}$ is unbounded as we let $|\disc(T(X))|$ grow. This shows the dependence between the degree of a minimal field of definition for $X$ and the discriminant of its N\'{e}ron-Severi group. 
On the other hand, it is worthwhile to point out that if $E$ is allowed to vary, then the magnitude of $|\disc(T(X))|$ alone does not ensure that $\frac{\phi_E(\D_X)}{\phi_F(d_X)}$ is big. Consider for example $d < 0$ any square-free integer such that $d \equiv 1 \mod 4.$ Let $\Oo_d$ be the ring of integers of $E_d = \Q(\sqrt{d})$ and let $X_d$ be the unique K3 surface of maximal Picard rank whose transcendental lattice is isometric to $\Oo_d$ with the quadratic form given by $(x,y) \mapsto \tr (x \overline{y}).$ Then the discriminant ideal of $X_d$ corresponds to the different $\D_{E_d}$ of $\Oo_d$, and in particular $\frac{\phi_E(\D_X)}{\phi_F(d_X)} =1$ in this case. This can be generalized to every CM field $E$ and it shows that in order to prove Theorem \ref{finiteness}, one also needs the finiteness of CM fields $E$ satisfying $\frac{h_E}{h_F} \leq N$ for any given $N>0$. This finiteness follows from Lemma 4 of \cite{zbMATH04077384}, which is a corollary of the Brauer-Siegel theorem:  
\begin{prop}
Given $N>0$, there are only finitely many CM number fields $E$ of a given degree that satisfy $h_E/h_F \leq N$. 
\end{prop}
To conclude the proof of Theorem \ref{finiteness}, we use an argument similar to the one in \ref{almostall} to deduce that for each CM field $E$, there are only finitely many K3 surfaces with $\frac{\phi_E(\D_X)}{\phi_F(d_X)} \leq N.$

\subsection{Small fields of definition.} \label{subsection Principal CM K3 surfaces with small fields of definition}
After the work of Sch\"{u}tt mentioned in the introduction, we would like to study K3 surfaces that admit small or natural fields of definition. We apply our results to classify K3 surfaces with CM by $\Oo_E$, where $E$ is quadratic imaginary, that admit a model of full Picard rank over $E$ or over the Hilbert class field of $E$, which we denote $K(E)$. 

Using the notation introduced in \ref{K3 class filds subsection}, let $\Cl_I(E)$ be the ray class group of $E$ modulo $I$. One has an exact sequence 
\begin{equation} \label{fundamental exact sequence}
    1 \rightarrow \Oo_E^I \rightarrow \Oo_E^\times \rightarrow (\Oo_E/I)^\times \rightarrow \Cl_I(E) \xrightarrow{\pi} \Cl(E) \rightarrow 0
\end{equation}
\begin{thm} \label{theorem small field of defi singular}
Let $X$ be a K3 surfaces with CM by $\Oo_E,$ and assume that $E$ is a quadratic imaginary field. If $X$ has big discriminant, then
\begin{enumerate}
    \item $X$ admits a model of Picard rank $20$ over $E$ if and only if $G$ acts trivially on $\Cl_{\D_X}(E),$ the ray class group of $E$ modulo $I;$
    \item $X$ admits a model of Picard rank $20$ over $K(E)$ if and only if $G$ acts trivially on $(\Oo_E/ \D_X)^\times / \mu(E).$
    \item Let $\phi$ be the Euler function and assume that $2$ does not ramify in $E$. Then $[F_{\D_X}(E) \colon E] = h_E \frac{\phi_E(\D_X)}{\phi(d_X)} $, and we have 
    \begin{itemize}
        \item $(1)$ happens if and only if $h_E = \frac{\phi_E(\D_X)}{\phi(d_X)} = 1$;
        \item $(2)$ happens if and only if $ \D_X = \D_E. $
    \end{itemize}
\end{enumerate}
\end{thm}

\begin{proof}

Note that $E \subset K(E), F_{\D_X}(E) \subset  K_{\D_X}(E)$.
The Galois group of $K_{\D_X}(E)/E$ isomorphic to $\Cl_{\D_X}(E)$, and via Galois theory $K(E)$ corresponds to $\ker(\pi)$ where $\pi$ is as in \ref{fundamental exact sequence} and $\F_{\D_X}(E)$ corresponds to $\Cl_{\D_X}(E)^G$ where $G \cong \Z/ 2\Z$ is the group generated by the complex conjugation.  
\begin{enumerate}
    \item Point (1) is equivalent to  $ F_{\D_X}(E) =E,$ i.e. to the following inclusion $$\Cl_{\D_X}(E)^G =  \Cl_{\D_X}(E).$$.
    \item To prove (2), note that $F_{\D_X}(E) \subset K(E)$ if and only if $(\Oo_E/ \D_X)^\times / \mu(E) \cong\Gal(K_{\D_X}(E)/K(E)) \subset \Gal(K_{\D_X}(E)/F_{\D_X}(E)) \cong \Cl_{\D_X}(E)^G.$
    \item Consider the formula
    $$
[F_{\D_X}(E) \colon E] =  \frac{2 \cdot h_E \cdot \phi_E(\D_X) \cdot [\Oo^{\times}_F \colon \mathrm{Nm}_{E/F}(\Oo^{\D_X}_E)]}{h_F \cdot \phi_F(d_X) \cdot [\Oo^{\times}_E \colon \Oo^{\D_X}_E] \cdot e(d_X) \cdot | \mathrm{H}^1(E^{\D_X,1}) |}.$$
Since $E$ is quadratic imaginary we know by Proposition \ref{keydiscriminant} that $\D_X \subset \D_E$, so that $e(d_X) = 2$. Moreover, under the assumption that $2$ does not ramify we have that $ \mathrm{H}^1(E^{\D_X,1}) =0.$ Finally, since $\mu(E) = \{ \pm 1 \}$ and $X$ has big discriminant, we conclude that $[F_{\D_X}(E) \colon E]  = h_E \cdot \frac{\phi_E(\D_X)}{\phi(d_X)},$ so that (1) happens if and only if $h_E =1$ and $\frac{\phi_E(\D_X)}{\phi(d_X)}=1.$ Finally, the last point follows from by forcing the equality $[F_{\D_X}(E) \colon E]  = h_E$ which becomes $ 1 = \frac{\phi_E(\D_X)}{\phi(d_X)}$. Using that $\D_X \subset \D_E$ we readily conclude that $ \D_X = \D_E$.
\end{enumerate}
\end{proof}

\begin{exs}
We consider a fundamental discriminant of class number one: $$d \in \{-7, -8, -11, -19, -43, -67, -163 \}.$$ For sake of simplicity, we do not consider $d= -3$ or $d= -4$. For any such $d$, let $\Oo_d$ be the ring of integers of $E_d := \Q(\sqrt{d})$. Let $X_d / \C$ be the unique K3 surfaces of type $(\Oo_d, 1)$. Its discriminant ideal is $\D_X = \D_{E_d}$, and if $d$ is odd then $\D_{E_d}= (\sqrt{d})$ whereas if $d= -8$ we have $\D_{E_d} = (2 \sqrt{-2})$. Since $\mu(E_d) = \{ \pm 1 \}$, we readily check that $\mu(E_d) \rightarrow \Oo_d / \D_d$ is injective for every such a $d$. Therefore, $X_d$ admits a canonical model $X_d^{\text{can}}$ over $F_{K3, \D_d}(E_d)$. The proposition above shows that $F_{K3, \D_d}(E_d) = E_d$ whenever $d$ is odd, and it is possible to show that the same hold for $d= -8.$ Therefore, $X_d^{\text{can}}$ can be defined over the CM field $E_d$. Elkies in the webpage \cite{ElkiesWeb} listed Weierstrass equations for all the K3 surfaces over $\Q$ of maximal Picard rank, with disciminant $d$ and N\'{e}ron-Severi defined over $\Q$. By the  property in Theorem \ref{bigggg}, these are our canonical models (once base-changed to $E_d$). Therefore, we have a list of explicit equations: 
\begin{itemize}
\item $X^{\text{can}}_{-7} \,\colon y^2 = x^3 - 75x - (64t + 378 + 64/t)$;
\item $X^{\text{can}}_{-8} \,\colon y^2 = x^3 - 675x + 27(27t - 196 + 27/t)$;
\item $X^{\text{can}}_{-11} \,\colon y^2 = x^3 - 1728x - 27(27t + 1078 + 27/t)$; 
\item $X^{\text{can}}_{-19} \,\colon y^2 = x^3 - 192x - (t + 1026 + 1/t)$;
\item $X^{\text{can}}_{-43} \,\colon y^2 = x^3 - 19200x - (t + 1024002 + 1/t)$;
\item $X^{\text{can}}_{-67} \,\colon y^2 = x^3 - 580800x - (t + 170368002 + 1/t)$;
\item $X^{\text{can}}_{-163} \,\colon y^2 = x^3 - 8541868800x - (t + 303862746112002 + 1/t)$.

\end{itemize}

\end{exs}

Note that the part of the statement concerning the Hilbert class field cannot be generalized to CM fields $E$ of higher degree, as $F_I(E)$ is not usually contained in $K_I(E)$ anymore. On the other hand, one can still determine which K3 surfaces can be defined over $E$, and the next proposition is the generalization of the one above in this sense. We omit the proof as it is the same to the one above, thanks to the facts listed in \ref{K3 class filds subsection}. 
\begin{prop} \label{definedE}
Let $X / \C$ by a K3 surface with CM by the ring of integers of a CM number field $E$, and assume that $X$ has big discriminant. Then $X$ admits a model with full Picard rank over $E$ if and only if
\begin{enumerate}
\item The complex conjugation acts trivially on $\Cl_{\D_X}(E)$ and
\item The natural inclusion $\mathrm{Nm}_{E/F}(\Oo_E^{\D_X}) \subset \Oo_F^\times \cap \mathrm{Nm}_{E/F}(E^{\D_X,1})$ is an isomorphism.
\end{enumerate}
If each prime of $F$ over $2$ does not ramify in $E$, this is equivalent to
$$\frac{h_E}{h_F} = \frac{\phi_{E}(\D_X)}{\phi_{F}(d_X)} = 1.$$
\end{prop}

\subsection{K3 class fields and ring class fields}
Let $X/ \C$ be a K3 surface of maximal Picard rank and let $d \coloneqq \disc(T(X)).$ Let $H(d)$ be the ring class field of $d$. The following results are due to Sch\"{u}tt.

\begin{thm}{\cite[Theorem 2]{MR2602669}}
Let $L$ be a number field over which $X$ admits a model with every divisor defined over $L$. Then $H(d) \subset L(\sqrt{d}).$
\end{thm}

\begin{prop}{\cite[Proposition 10]{MR2346573}}
A K3 surface $X$ of Picard rank $20$ always admits a model over $H(d).$
\end{prop}

Later, Hulek and Sch\"{u}tt in \cite{MR2950152} analyzed the Galois action on $\NS(\overline{X})$, where $X/H(d)$ is the model from the proposition above, and concluded that every line bundle of $X$ is defined over $H(d)$. Thus, if $X$ is also principal, the property in Theorem \ref{DescendingI} allow us to conclude that $F_{\D_X}(E) = H(d),$ a fact that we missed to notice in our previous work. 

We conclude by giving an independent proof of the equality $F_{\D_X}(E) = H(d)$ when $d$ is a principal discriminant, i.e., $d$ is the discriminant of some quadratic imaginary field $E$. In this case, $H(d)$ is the Hilbert class field of $E$, which we still denote by $K(E)$. We also assume that $\mu(E) = \{ \pm 1 \}$, the general case follows with some minor adjustments, taking into consideration Theorem \ref{Classification20}.  

\begin{prop}
Let $X$ be as before, and suppose that $\disc(\NS(X)) = \disc(\Oo_E).$ Then $K(E) = F_{\D_X}(E).$
\end{prop}

\begin{proof}
First, one notes that $\D_X = \D_E$ since $\D_X \subset \D_E$ by \ref{keydiscriminant} and $$\disc(\NS(X)) = \mathrm{Nm}(\D_X) = |\disc(E)| = \mathrm{Nm}(\D_E).$$ Thus, we only need to show that $F_{\D_E}(E) = K(E)$. For simplicity, we write $\D \coloneqq \D_E = \D_X$. Consider again the exact sequence \eqref{fundamental exact sequence} with $I = \D$:

\begin{equation}
    1 \rightarrow \Oo_E^\D \rightarrow \Oo_E^\times \rightarrow (\Oo_E/ \D)^\times \rightarrow \Cl_{\D}(E) \xrightarrow{\pi} \Cl(E) \rightarrow 0
\end{equation}

As proved in \ref{theorem small field of defi singular}, it is enough to show that $(\Oo_E/ \D)^\times/\{\pm 1\} = \Cl_{\D}^G, $ where $G$ denotes the group generated by complex conjugation. Since $G$ acts trivially on $(\Oo_E/ \D)^\times/\{\pm 1\}$, one inclusion is for free. To prove the other, denote by $\tilde{\Cl}(E) \coloneqq \pi^{-1}(\Cl(E)[2])$, where $\Cl(E)[2] = \cl(E)^G.$ Then $\Cl_{\D}^G \subset \tilde{\Cl}(E);$ taking invariants in the sequence above we obtain
$$1 \rightarrow (\Oo_E/ \D)^\times/\{\pm 1\} \rightarrow \Cl_{\D}^G \rightarrow \Cl(E)[2]  \xrightarrow{\delta} H^1(G,(\Oo_E/ \D)^\times/\{\pm 1\}).$$
Thus $K(E) = F_{\D}(E)$ if and only if $\delta$ is injective. Since $G$ acts trivially on $(\Oo_E/ \D)^\times/\{\pm 1\}$ we can identify $H^1(G,(\Oo_E/ \D)^\times/\{\pm 1\})$ with the $2$-torsion of $(\Oo_E/ \D)^\times/\{\pm 1\}$. Moreover, $\delta$ is described as follows: for any $i \in \Cl(E)[2] $ let $j \in \tilde{\Cl}(E)$ such that $\pi(j) = i. $ Then $j/ \overline{j} \in (\Oo_E/ \D)^\times/\{\pm 1\}$ by \eqref{fundamental exact sequence} and $\delta(i) = j/ \overline{j}.$
The construction above also shows that we can consider $\delta$ as a map $\delta \colon \tilde{\Cl}(E) \rightarrow (\Oo_E/ \D)^\times/ \{\pm 1 \}.$

\begin{claim}
There is a natural lift $\tilde{\delta} \colon \tilde{\Cl}(E) \rightarrow (\Oo_E/ \D)^\times$ such that

\begin{center}
\begin{tikzcd}
& (\Oo_E/ \D)^\times  \arrow[d]  \\
\tilde{\Cl}(E) \arrow[r, "\delta" ] \arrow[ur, "\tilde{\delta}"] & (\Oo_E/ \D)^\times/\{\pm 1\},
\end{tikzcd} 
\end{center}
commutes. 
\end{claim}
Before proving the claim, we introduce the following notation. Let $I \subset \Oo_E$ be any ideal. 
\begin{itemize}
    \item $\mathcal{I}^{I}$ denotes the group of fractional ideals of $E$ that are coprime to $I$;
    \item $E^{1, I} \subset E^\times$ is as in \ref{K3 class filds subsection}, and $\mathcal{P}^I \subset \mathcal{I}^{I}$ is the subgroup generated by the elements of $E^{1, I}$. 
    \item $E^{I} \coloneqq \{ e \in E^\times \colon (e) \in \mathcal{I}^{I} \}.$
\end{itemize}
In this way, we have an isomorphism $\Cl_{I} =  \mathcal{I}^{I} / \mathcal{P}^I.$

\begin{claimproof}
 If $i \in \Cl(E)[2]$ then there are ramified primes $ p_1, p_2, \cdots p_r \in \Z_{\geq 0}$ such that, if $\mathfrak{p}_1$ denotes the unique prime ideal of $E$ over $p_i,$ then the ideal $\mathfrak{P} \coloneqq \prod_i \mathfrak{p}_i$ represents the ideal class $i$ in $\Cl(E)[2] \subset \Cl(E)$. Let $J \in \mathcal{I}^{\D}$ be any ideal that represents an element $j \in \tilde{\Cl}_{\D}(E)$ that lies over $i \in \Cl(E)[2]$ via the natural projection $\tilde{\Cl}_{\D}(E) \rightarrow \Cl(E)[2]$. Then $J$ can be uniquely written as $(e) / \mathfrak{P}$ for some element $e \in E^\times$ uniquely determined up to $\pm 1.$ The class of $e/ \overline{e}$ in $E^{ \mathcal{D}}/ E^{1,\mathcal{D}} \cong (\Oo_E/\D)^\times$ only depends on $j,$ so that  $$\tilde{\delta}(j) \coloneqq e/ \overline{e}  \in (\Oo_E/\D)^\times$$
is the lift we are after.
\end{claimproof}
We are going to prove that $\ker(\tilde{\delta}) = (\Oo_E/\D)^\times/\{ \pm 1\}.$ Write $E = \Q(\sqrt{d})$ with $d < 0$ a square-free integer and put $\mathcal{D} \coloneqq \D_E$ and $D \coloneqq \mathrm{Nm}(\D).$ In the following we shall make use of the fact that if $v$ is a non-archimedean, normalized valuation of $E$ that is ramified over $\Q$, then $v(x) \in 2 \Z$ for any $x \in \Q^\times.$ As it often happens with imaginary quadratic fields, we need to separate the cases depending on $d$.

\begin{itemize}
    \item Suppose that $d \equiv 1 \mod 4$, so that $2$ does not ramify in $E$. Let $p_1, \cdots, p_n\in \Z_{\geq 0}$ be the finite primes of $\Q$ that ramify in $E$, and suppose that $i \neq 0.$ Then as explained in the proof before we can assume without loss of generality that $i = [\mathfrak{P}]=[\mathfrak{P}^{-1}]$ where $\mathfrak{P} = \mathfrak{p}_1 \cdots \mathfrak{p_r}$ for some $0<r<n$. Let $e \in E^{\times}$ be such that the ideal $(e) \mathfrak{P}^{-1}$ belongs to $\mathcal{I}^{\D}.$ We only need to show that $e / \overline{e} \neq 1 \mod \D.$ Suppose by contradiction that 
\begin{equation} \label{equation proof ring class}
e = \overline{e} + \overline{e} \Delta
\end{equation}
for some $\Delta \in \D \cdot E^{\D}.$ Write $e = x + y \sqrt{d}$ with $x,y \in \Q.$ 
The fact that $v(x) \in 2\Z$ for any $x \in \Q^{\times}$ implies that $y \neq 0,$ because otherwise one could not have that $(e) \mathfrak{P}^{-1} \in \mathcal{I}^{\D}$. Similarly, since $i \neq 0$ by assumption, one shows that $x \neq 0$ too. Let $v_i$ the valuation associated to $\mathfrak{p}_i$. Then $v_i(\sqrt{d}) = 1$ for any $i \in \{1, \cdots, r \}.$ Using the non-archimedean property, we have $v_i(x + y \sqrt{d}) = \min \{v_i(x), v_i (y \sqrt{d}) \},$ and since $(e) \mathfrak{P}^{-1} \in \mathcal{I}^{\D}$ then $\min \{v_i(x), v_i (y \sqrt{d}) \} = 1$. So we conclude that $v_i(y) = 0$ and that $v_i(x) > 0.$ Since $x$ is rational this means that $x \equiv 0 \mod p_i$ for any $i \in \{1, \cdots, r \}.$ Substituting in \eqref{equation proof ring class} we obtain $$x + y \sqrt{d} = (x - y \sqrt{d})(1+ \Delta),$$
hence 
$$2 y \sqrt{d} = x \Delta - y \sqrt{d} \Delta.$$

We pick an $i = 1, \cdots, r$ and we reduce the equation above modulo $\mathfrak{p}_i^2 = (p_i)$ to obtain 
$$2 y \sqrt{d} \equiv 0 \mod p_i,$$
i.e., $y \equiv 0 \mod \mathfrak{p}_i$ since $2$ and $\mathfrak{p}_i$ are coprime. This contradicts $v_i(y) = 0$ for each $i \in \{1, \cdots, r \}.$
    
\item If $d \equiv 3 \mod 4$ then $2$ ramifies in $E$, $D = 4d$ and $\mathcal{D} = 2 (\sqrt{d}).$ If $2$ does not appear among $p_1, \cdots, p_r$ then we proceed as above. Otherwise, we can assume without loss of generality that $p_1 = 2.$ Then reasoning as before, since $(x+y\sqrt{d}) \mathfrak{P}^{-1} \in \mathcal{I}^{(2 \sqrt{d})}$ we must have that $1= v_1(x+y \sqrt{d}) \geq \min \{v_1(x), v_1 (y) \} $ because $2$ and $d$ are coprime. Since both $x$ and $y$ are rational, a quick analysis shows that this can only happen if $v_1(x) = v_1(y) = 0$ (and therefore $x \equiv y \mod 2).$ Expanding the equality $x+ y \sqrt{d} = (x - y \sqrt{d})(1+ \Delta)$ we get $$2y \sqrt{d} = x \Delta + y \sqrt{d} \Delta.$$ Note that $\Delta \in (2 \sqrt{d}) \in \Oo_E$, so that we can divide both sides by $2$ and obtain 
$$y \sqrt{d} = x \Delta' + y \sqrt{d} \Delta'.$$
Using that $x \equiv y \equiv \sqrt{d} \equiv 1 \mod \mathfrak{p}_1$ we see that the equation above forces $y  \equiv 2 \Delta' \equiv 0 \mod \mathfrak{p}_1,$ another contradiction. 
    
    \item Finally, if $d \equiv 2 \mod 4$ then $D \equiv 0 \mod 8$ since $\D = (2 \sqrt{d})$. If all the $p_1, \cdots, p_r$ are odd, then we conclude as in the first point. Otherwise, we put $p_1 =2$ and we note that this time the fact that $$(x+ y\sqrt{d})/ \mathfrak{P} \in \mathcal{I}^{\D}$$ implies that $v_1(y) = 0$ and $v_1(x)>0$. As above, assume that there is $\Delta \in \D \cdot E^{\D}$ such that the equation $$ 2y \sqrt{d} = \Delta x - y \sqrt{d} \Delta $$ holds. Computing the valuations, we see that $v_1(2y \sqrt{d}) = 3,$ $v_1(\Delta x) = 3 + v_1(x)$ with $v_1(x) \in 2 \Z_{>0}$ since $x \in \Q$ and $v_1(y \sqrt{d} \Delta) = 4.$ But this can happen only if $v_1(x)=0,$ a contradiction.

\end{itemize} 

\end{proof}

\bibliography{bibliographyT}
\bibliographystyle{alpha}

\end{document}